\documentclass[12pt,oneside]{amsart} 
\usepackage{amssymb,amscd}
\usepackage{euscript}

\textwidth15.2cm
\hoffset=-1.1cm

      \theoremstyle{plain}
      \newtheorem{theorem}{Theorem}[section]
      \newtheorem{lemma}[theorem]{Lemma}
      \newtheorem{corollary}[theorem]{Corollary}
      \newtheorem{proposition}[theorem]{Proposition}

      \newtheorem{definition}[theorem]{Definition}     
           \newtheorem{assumption}[theorem]{Assumption}     
          
\numberwithin{equation}{section}

      \makeatletter
      \def\@setcopyright{}
      \def\serieslogo@{}
      \makeatother

\def\A{\EuScript{A}} 
\def\B{\EuScript{B}} 
\def\E{\mathcal{E}}
\def\L{\mathcal{L}}
\def\F{\mathcal F}

\def\H{{H}} 
\def\h{\mathcal{H}}
\def\M{\mathcal{M}}

\def\n{\mathcal N}

\def\c{{C}}
\def\R{\mathbb R}
\def\rd{{\mathbb R ^d}}

\def\Z{\mathbb Z}
\def\N{\mathbb N}
\def\T{\mathcal T}

\def\dist{\text{dist}}

\def\Id{\text{Id}}
\def\e{\epsilon}

\def\bv{\mathbf v}

\def\I{{I}}

\def\p{\varphi}
\def\P{\Phi}
\def\q{\eta}

\def\loc{\text{loc}}
\def\EE{\mathcal{E}}
\def\vv{{V}}
\def\U{{\tilde{U}}}
\def\UU{{\tilde{\mathcal{U}}}}
\def\V{{U}}
\def\VV{\mathcal{U}}
\def\P{{\Phi}}
\def\PP{\mathcal{P}}

\def\QED{\hfill\hfill{\square}}

\begin{document}

\date{\today}
\author{Boris Kalinin$^\ast$ and Victoria Sadovskaya$^{\ast\ast}$}

\address{Department of Mathematics, The Pennsylvania State University, University Park, PA 16802, USA.}
\email{kalinin@psu.edu, sadovskaya@psu.edu}

\title [On regularity of conjugacy between linear cocycles]
{On regularity of conjugacy between linear cocycles over partially hyperbolic systems} 

\thanks{{\em Key words:} partially hyperbolic diffeomorphism, linear cocycle, conjugacy, cohomology}
\thanks{{\it Mathematical subject classification:}\,  37D20, 37C15}
\thanks{$^{\ast}$  Supported in part by Simons Foundation grant 855238}
\thanks{$^{\ast\ast}$ Supported in part by Simons Foundation grant 00002874}


\begin{abstract}
We consider H\"older continuous $GL(d,\R)$-valued  cocycles, and more generally linear cocycles, over an accessible volume-preserving center-bunched partially hyperbolic diffeomorphism. We study the regularity of a conjugacy between two cocycles. We establish continuity of a measurable conjugacy between {\em any} constant $GL(d,\R)$-valued cocycle and its perturbation. We deduce this from our main technical result on continuity of  a measurable conjugacy between a fiber bunched linear cocycle and a cocycle with a certain block-triangular structure. The latter class covers constant cocycles with one  Lyapunov exponent. We also establish a result of independent interest on continuity of measurable solutions for twisted vector-valued cohomological equations over partially hyperbolic systems. In addition, we give more general versions of earlier results on regularity of invariant subbudles, Riemannian metrics, and conformal structures.
\end{abstract}

\maketitle


\section{Introduction and main results}

Cocycles and their cohomology  play an important role in dynamics.
In this paper we consider $GL(d,\R)$-valued  cocycles, and more generally linear cocycles, 
over a volume-preserving partially hyperbolic diffeomorphism $f$ of a compact manifold $\M$.
The prime examples are given by the differential of $f$ and its restrictions  to invariant 
subbundles, for example stable, unstable, or center. Such cocycles are used in the study of
dynamics and rigidity of hyperbolic and partially hyperbolic systems. 

First we discuss $GL(d,\R)$-valued  cocycles.
 
  \begin{definition} 
  Let $A:\M\to GL(d,\R)$ be a continuous function.
The $GL(d,\R)$-valued cocycle over $f$ generated by  $A$ is the map 
 $\A:\,\M \times \Z \,\to GL(d,\R)$ defined  as follows: for $x\in \M$ and $n\in \N$,  
\vskip.1cm \noindent 
$$\A_x^0=Id,\,  \quad \A_x^n = A(f^{n-1} x)\circ \cdots \circ A(x) \quad \text{and }\quad 
\A_x^{-n}= (\A_{f^{-n} x}^n)^{-1}.
$$
\end{definition}

 If the tangent bundle of $\M$ is trivial, $T\M= \M\times \R^d$, 
then the differential $Df$ can be viewed as a $GL(d,\R)$-valued cocycle with
$  A(x)=Df_x$ and  $\A_x^n=Df^n_x.$ 
\vskip.1cm
A natural  equivalence relation for cocycles is defined as follows.  

\begin{definition} \label{cohomology def}
$\,\,GL(d,\R)$-valued cocycles\, $\A$\, and\, $\B$\, over $f$ are\, (measurably\, or \\continuously) {\em cohomologous} 
if there exists a (measurable or continuous) function \\ $C:\M\to GL(d,\R)$ such that
\begin{equation} \label{C def}
\B_x=C(f x) \circ \A_x \circ  C(x)^{-1} 
  \;\text{ (almost everywhere  or for all  $x\in \M$).}
\end{equation}
We refer to $C$ as a (measurable or continuous) {\em conjugacy} between $\A$  and $\B$. 
\end{definition}


We consider the question whether a measurable conjugacy between two cocycles is continuous. A positive answer was obtained by Wilkinson  in \cite{W} for H\"older continuous
 $\R$-valued cocycles over an accessible center-bunched volume preserving 
 $C^2$  partially hyperbolic  diffeomorphism. 

For cocycles with values in non-commutative groups, studying cohomology is more difficult. 
Usually additional assumptions related to their growth are made, such as fiber bunching. 
The latter means that non-conformality of the cocycle is dominated by the expansion and 
contraction in the base, see Definition \ref{FB}.
The first result on continuity of a measurable conjugacy for non-commutative cocycles over 
partially hyperbolic systems was obtained in \cite[Theorem 4.2]{KS15}. It extended earlier 
results for cocycles over hyperbolic diffeomorphisms \cite{Sch,NP,PW,S15}.
There we established  continuity of a measurable conjugacy between H\"older continuous fiber 
bunched cocycles, one of which is uniformly quasiconformal. A cocycle $\A$ is  {\em uniformly 
quasiconformal}  if $\,\| \A^n_x\|\cdot \|( \A^n_x)^{-1}\|$ is uniformly bounded in $x\in \M$ and $n\in \Z$.     

In contrast to scalar cocycles, a measurable conjugacy between $GL(d,\R)$-valued cocycles
 is not always continuous, even when $f$ is hyperbolic and both cocycles are close to the identity.
   Indeed, in \cite{PW} Pollicott and Walkden constructed  smooth 
   $GL(2,\R)$-valued cocycles over an Anosov toral automorphism of the form 
\begin{equation}\label{PWex}
\A_x=\left[ \begin{array}{cc} a(x) & b(x) \\ 0 & 1 \end{array} \right] 
\quad\text{and} \quad
\B_x= \left[ \begin{array}{cc} a(x) & 0 \\ 0 & 1 \end{array} \right]
\end{equation}
that are measurably (with respect to the Lebesgue measure $m$), but not continuously cohomologous.
We note that these cocycles have two Lyapunov exponents, $0$ and $\int \log a(x) \, dm<0$.
Thus in general one can not expect continuity of a measurable conjugacy in case of more than one Lyapunov  exponent.

The next theorem gives a positive result for a constant cocycle $\A$
with one Lyapunov  exponent, which means that all eigenvalues 
of the matrix $A$ generating $\A$ have the same modulus.

\begin{assumption} \label{PH}
In this paper, $f$ is an accessible center-bunched $C^2$ partially hyperbolic  
diffeomorphism of a compact manifold $\M$ preserving a  volume $\mu$. 
(See Section \ref{partial} for details.)
\end{assumption}

\begin{theorem} \label{constant 1exp}
Let $\A$ be a constant $GL(d,\R)$-valued cocycle with one Lyapunov exponent and let $\B$ be an su-$\beta$-H\"older fiber bunched $GL(d,\R)$-valued cocycle over $f$.  
Then any $\mu$-measurable conjugacy between $\A$ and $\B$ coincides $\mu$-a.e. with an su-$\beta$-H\"older conjugacy.
\end{theorem}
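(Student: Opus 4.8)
\smallskip\noindent\emph{Normalization and strategy.} The plan is to normalize $\A$ and reduce to a statement about fiber bunched cocycles versus cocycles of block‑triangular type, proved by induction on the number of blocks. Since $\A$ has one Lyapunov exponent, every eigenvalue of its generating matrix $A$ has the same modulus $\rho$; passing to the real Jordan form of $\rho^{-1}A$, there is a constant $P\in GL(d,\R)$ such that $PAP^{-1}$ is block upper triangular with every diagonal block conformal of ratio $\rho$ (a scalar $\pm\rho$, or $\rho$ times a planar rotation). Conjugating $\B$ by the constant, hence su-$\beta$-H\"older, matrix $P$ and replacing $\A$ accordingly does not change the statement, so we may assume $A$ has this form. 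If $A$ is a single conformal block we are in the base case below; otherwise $\R^d=\R^{d_1}\oplus\R^{d_2}$ with $0<d_i<d$ and $A=\left(\begin{smallmatrix}A_1&A_{12}\\0&A_2\end{smallmatrix}\right)$, where $A_1$ is conformal of ratio $\rho$ and $A_2$ is again block upper triangular with conformal diagonal blocks of ratio $\rho$ --- a constant one-exponent cocycle of dimension $d_2<d$.

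\smallskip\noindent\emph{Base case.} Suppose $\A$ is conformal. Then $\A$ is uniformly quasiconformal, since $\|\A^n_x\|\cdot\|(\A^n_x)^{-1}\|=\|A^n\|\cdot\|A^{-n}\|=1$ for all $n$, and the statement is (an su-$\beta$-H\"older version of) \cite[Theorem 4.2]{KS15}. The mechanism: $\B$ admits su-$\beta$-H\"older stable and unstable holonomies $H^s,H^u$ while the constant cocycle $\A$ has trivial ones; iterating \eqref{C def} gives $\B^n_xC(x)=C(f^nx)A^n$, so for $y\in W^s_{\loc}(x)$ the matrix $\Phi(x,y):=C(y)^{-1}H^s_{x,y}C(x)$ satisfies $\Phi(f^nx,f^ny)=A^n\Phi(x,y)A^{-n}$, which is bounded in $n$ because $A$ is conformal. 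A Lusin argument along recurrence times, using absolute continuity of the stable foliation, forces $\Phi(x,y)=\Id$, i.e.\ $C$ agrees $\mu$-a.e.\ with a function invariant under stable and unstable holonomies; by accessibility and center bunching such a function is su-$\beta$-H\"older.

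\smallskip\noindent\emph{Inductive step.} Assume the conclusion for constant one-exponent cocycles of dimension $<d$, and write $A=\left(\begin{smallmatrix}A_1&A_{12}\\0&A_2\end{smallmatrix}\right)$ as above. The subbundle $E(x):=C(x)(\R^{d_1}\times\{0\})$ is $\mu$-measurable and $\B$-invariant, so by the regularity of invariant subbundles for fiber bunched cocycles it agrees $\mu$-a.e.\ with a continuous su-$\beta$-H\"older one; after an su-$\beta$-H\"older change of frame taking it to $\R^{d_1}\times\{0\}$ (which leaves $\A$ unchanged), both $\B$ and $C$ become block upper triangular:
$$\B=\begin{pmatrix}\B_1&\B_{12}\\ 0&\B_2\end{pmatrix},\qquad C=\begin{pmatrix}C_1&C_{12}\\ 0&C_2\end{pmatrix}.$$
Restrictions and quotients of a fiber bunched cocycle by a continuous invariant subbundle are again fiber bunched, so $\B_1,\B_2$ are fiber bunched; they are measurably conjugate via $C_1,C_2$ to $\A_1$ (conformal) and $\A_2$ (one exponent, dimension $d_2<d$), so the base case and the inductive hypothesis show $C_1,C_2$ agree $\mu$-a.e.\ with su-$\beta$-H\"older conjugacies. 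Writing \eqref{C def} blockwise and using $\B_i=C_i(f\cdot)\,A_i\,C_i(\cdot)^{-1}$, the matrix-valued function $D:=C_{12}C_2^{-1}$ solves the twisted cohomological equation
$$D(fx)\,\B_2(x)-\B_1(x)\,D(x)=\B_{12}(x)-C_1(fx)\,A_{12}\,C_2(x)^{-1},$$
whose right-hand side is su-$\beta$-H\"older; the associated twisting cocycle $M\mapsto\B_1(x)\,M\,\B_2(x)^{-1}$ on $\mathrm{Hom}(\R^{d_2},\R^{d_1})$ is measurably conjugate to $M\mapsto A_1\,M\,A_2^{-1}$, all of whose eigenvalues have modulus $\rho\cdot\rho^{-1}=1$. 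The continuity theorem for measurable solutions of twisted vector-valued cohomological equations over partially hyperbolic systems then shows that $D$, hence $C_{12}=DC_2$, and therefore $C$, is su-$\beta$-H\"older. Undoing the change of frame and the initial conjugacy completes the induction.

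\smallskip\noindent\emph{Main obstacle.} The delicate point is to verify that the cocycles produced along the way --- the restriction and quotient of $\B$, and above all the twist $M\mapsto\B_1 M\B_2^{-1}$ --- are fiber bunched \emph{with room to spare}, since the twisted-cohomology result requires the twisting cocycle to be narrow enough relative to the stable and unstable rates of $f$. This forces one to exploit the single-exponent hypothesis through the uniform block-triangular--conformal structure of the constant matrix $A$, so that its non-conformality contributes only polynomially in $n$, rather than through the $\mu$-a.e.\ statement that $\B$ has one Lyapunov exponent; the Pollicott--Walkden cocycles \eqref{PWex}, which have two exponents, show that fiber bunching of the twist alone does not suffice.
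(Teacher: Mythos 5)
Your overall plan is close to the paper's: normalize $A$ by its real Jordan form, exploit the block-triangular-with-conformal-diagonal structure, use the regularity theorem for measurable invariant subbundles to transfer that structure to $\B$, handle diagonal blocks via the uniformly-quasiconformal conjugacy theorem, and handle off-diagonal blocks via the twisted-cohomology theorem. The paper indeed deduces Theorem~\ref{constant 1exp} from a block-structured Theorem~\ref{main} in exactly this spirit.

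However, there is a genuine gap in your inductive step. You set $D=C_{12}C_2^{-1}$ and want to apply the twisted-cohomology theorem with twist $\F: M\mapsto \B_1(x)\,M\,\B_2(x)^{-1}$. Theorem~\ref{twist-meas} requires the twist $\F$ to be \emph{uniformly bounded}, not merely ``fiber bunched with room to spare'' or ``with unit-modulus spectrum.'' After conjugating by the (already continuous) $C_1$, $C_2$, your twist becomes $M\mapsto A_1\,M\,A_2^{-1}$, where $A_2$ is the whole lower-right part of the Jordan form; when $A_2$ has nontrivial Jordan blocks this operator grows polynomially in $n$, not boundedly. The boundedness hypothesis cannot be relaxed to polynomial growth in the proof of Theorem~\ref{twist-meas}: the key estimate is $\|\Delta_n\|\le\|(\F_x^n)^{-1}\|\cdot\|\Delta_n'\|$, where $\|\Delta_{n_i}'\|\to0$ only via uniform continuity of the measurable solution on a Lusin set, with no quantitative rate, so a polynomial factor $\|(\F_x^n)^{-1}\|\sim n^m$ destroys the conclusion $\|\Delta_{n_i}\|\to0$. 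The paper avoids this by inducting at a finer granularity: rather than treating $C_{12}$ as one block against the full $A_2$, it peels off the individual conformal diagonal blocks $\A^{i,i}$ and $\B^{j,j}$ and writes a separate twisted equation for each off-diagonal block $\c^{j,i}$, with twist $\q\mapsto (\B^{j,j})^{-1}\circ\q\circ\A^{i,i}$; since $\psi\A^{i,i}$ and $\psi\B^{j,j}$ are isometric with respect to suitable invariant metrics, that twist is genuinely uniformly bounded and Theorem~\ref{twist-meas} applies. Your proof needs to be refined to this block-by-block (doubly indexed) induction, or you need a strengthened twisted-cohomology theorem tolerating polynomially growing twists, which is not established in the paper.

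(A smaller omission: before invoking Theorem~\ref{distribution} to upgrade the measurable invariant subbundle $E(x)=C(x)(\R^{d_1}\times\{0\})$ to an su-$\beta$-H\"older one, you must first note that $\lambda_+(\B,\mu)=\lambda_-(\B,\mu)$, which follows from the one-exponent property of $\A$ and Lemma~\ref{equal exp}.)
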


We say that a function is su-$\beta$-H\"older if it is continuous on $\M$ and $\beta$-H\"older 
continuous along the leaves of stable and unstable manifolds for $f$, see Section \ref{su func}.

As a corollary of Theorem \ref{constant 1exp}, we obtain continuity of a measurable conjugacy 
between {\em any} constant $GL(d,\R)$-valued cocycle and its perturbation, without 
fiber bunching or one Lyapunov exponent assumptions on either cocycle.

\begin{theorem} \label{constant perturb}
Let  $\A$ is be a constant $GL(d,\R)$-valued cocycle over $f$. Then
for any H\"older continuous $GL(d,\R)$-valued cocycle $\B$ sufficiently $C^0$ close to $\A$, 
any $\mu$-measurable conjugacy between $\A$ and $\B$ coincides  $\mu$-a.e. with an su-H\"older 
conjugacy.
\end{theorem}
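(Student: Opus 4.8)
\emph{Proof sketch.} Write $\A$ as the constant cocycle given by a matrix $A\in GL(d,\R)$, and let $\R^d=V_1\oplus\cdots\oplus V_k$ be the $A$-invariant splitting into the sums of generalized eigenspaces with eigenvalue modulus $\rho_1<\cdots<\rho_k$, so that $A=A_1\oplus\cdots\oplus A_k$ with each $A_i=A|_{V_i}$ a constant $GL(\dim V_i,\R)$-valued cocycle with one Lyapunov exponent $\log\rho_i$. Since $\|A^n|_{V_i}\|$ and $\|(A^n|_{V_{i+1}})^{-1}\|^{-1}$ differ from $\rho_i^{\,n}$ and $\rho_{i+1}^{\,n}$ only by polynomial factors, the splitting $V_1\oplus\cdots\oplus V_k$ is dominated for the constant cocycle over $f$. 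Hence, if $\B$ is $C^0$-close enough to $A$, the standard persistence of dominated splittings (invariant cone fields) provides a continuous $\B$-invariant splitting into sub-bundles $E_1,\dots,E_k$ with $\dim E_i=\dim V_i$ and $E_i$ close to $V_i$; such invariant sub-bundles are su-H\"older (see the regularity results for invariant sub-bundles in this paper). Finally, since $A_i$ has a single eigenvalue modulus, after an appropriate linear change of coordinates on $V_i$ we may assume $\|A_i\|\cdot\|A_i^{-1}\|$ is as close to $1$ as we wish, so that $A_i$ is strictly fiber bunched; as fiber bunching is a $C^0$-open condition, choosing an su-H\"older frame adapted to $E_1\oplus\cdots\oplus E_k$ we may write $\B=\B_1\oplus\cdots\oplus\B_k$ with each $\B_i$ an su-H\"older, fiber bunched $GL(\dim V_i,\R)$-valued cocycle.

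Now let $C:\M\to GL(d,\R)$ be a $\mu$-measurable conjugacy, $\B_x=C(fx)\circ\A_x\circ C(x)^{-1}$ a.e. For each $i$ the measurable sub-bundle $x\mapsto C(x)V_i$ is $\B$-invariant. Along a $\mu$-typical orbit the merely measurable factor $C(f^nx)$ grows subexponentially (tempering of the measurable conjugacy), so every nonzero vector of $C(x)V_i$ has $\B$-Lyapunov exponent exactly $\log\rho_i$; since for $\B$ close to $A$ the Oseledets subspaces carrying the exponents of the $i$-th block span precisely $E_i$, domination forces $C(x)V_i=E_i(x)$ for a.e. $x$. Therefore, in the chosen frames adapted to $\{V_i\}$ and $\{E_i\}$, $C$ is block-diagonal a.e., $C=C_1\oplus\cdots\oplus C_k$ with each $C_i(x)\in GL(\dim V_i,\R)$, and comparing diagonal blocks in the conjugacy equation yields $\B_i(x)=C_i(fx)\circ A_i\circ C_i(x)^{-1}$ a.e.

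It remains to apply Theorem \ref{constant 1exp} to each pair $(A_i,\B_i)$: $A_i$ is a constant cocycle with one Lyapunov exponent and $\B_i$ is su-H\"older and fiber bunched, so $C_i$ coincides $\mu$-a.e. with an su-H\"older conjugacy $\bar C_i$ between $A_i$ and $\B_i$. Reading $\bar C_1\oplus\cdots\oplus\bar C_k$ back through the (constant and su-H\"older) coordinate changes produces an su-H\"older conjugacy between $\A$ and $\B$ that agrees with $C$ $\mu$-a.e., as claimed.

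The main obstacles are concentrated in the first two paragraphs: verifying that the dominated splitting of the constant model persists and is su-H\"older for $\B$; arranging, via a change of norm on each $V_i$, that the blocks $\B_i$ are genuinely fiber bunched --- a single Jordan block is not fiber bunched in the Euclidean norm but becomes so after conjugation; and, crucially, identifying the measurable bundles $C(\cdot)V_i$ with $E_i$, which uses tempering to control the non-continuous $C$ together with the fact that, for $\B$ sufficiently $C^0$-close to $A$, the Lyapunov spectrum of $\B$ respects the blocks. Once these are in place the reduction to Theorem \ref{constant 1exp} is routine.
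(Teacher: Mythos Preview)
Your proposal is correct and follows the same approach as the paper's proof: split $A$ by eigenvalue modulus, persist the dominated splitting to $\B$, verify H\"older regularity and fiber bunching of each block, show $C$ is block-diagonal via preservation of Lyapunov exponents under measurable conjugacy (the paper's Lemma~\ref{equal exp} is exactly your tempering argument), and then apply Theorem~\ref{constant 1exp} blockwise. One caveat on your citation: the H\"older regularity of the $\B$-invariant dominated splitting comes from the classical theory of dominated splittings (the paper cites \cite[Lemma~5.1]{KSW}), not from Theorem~\ref{distribution} of this paper, which would require the full cocycle $\B$ to have a single Lyapunov exponent.
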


We deduce Theorem \ref{constant 1exp} from a more general result, Theorem \ref{main} below. 
It holds  in a broader context of linear cocycles on vector bundles, 
 see Section \ref{lin coc} for details. Also, instead of a constant cocycle with one exponent  
 we consider  a cocycle 
 with a certain ``block-triangular" structure. As we show in Proposition \ref{prop distortion}, this structure  implies 
 that the cocycle is fiber bunched and has  one Lyapunov exponent for each  $f$-invariant ergodic measure.  For a hyperbolic $f$, the converse also holds by \cite[Theorem 3.9]{KS13}. 
However, the converse is not known and may not hold in general in the  partially hyperbolic case,
where existing results, such as \cite[Theorem 3.4]{KS13}, give a weaker structure.

 We say that a linear cocycle $\tilde \A$ on a vector bundle $\tilde \E$ over $\M$ is {\em uniformly bounded} 
 if $\|\tilde \A^n_x\|$ is uniformly bounded in $x\in \M$ and $n\in \Z$. This notion does not depend 
 on the choice of a continuous norm on $\tilde \E$.  
 
\begin{theorem} \label{main}
Let $\E$ and $\E'$ be  $\beta$-H\"older vector bundles over $\M$, or more generally 
su-$\beta$-H\"older subbundles of $\beta$-H\"older vector bundles over $\M$.

Let $\A$ be an su-$\beta$-H\"older linear cocycle on $\E$ over $f$. Suppose that there exist 
a flag of su-$\beta$-H\"older  $\A$-invariant sub-bundles 
\begin{equation} \label{flagH}
\{0\} =\vv^0 \subset \vv^1 \subset \dots  \subset \vv^{k-1} \subset \vv^k = \E
\end{equation}
and a positive su-$\beta$-H\"older function $\psi : \M \to \R$ so that the quotient-cocycles 
induced by the cocycle $\psi \A$ on $\vv^{i}/\vv^{i-1}$ are uniformly bounded for $i=1, ... , k$. 

Let $\B$ be an su-$\beta$-H\"older fiber bunched cocycle over $f$ on $\E'$. Then any $\mu$-measurable 
conjugacy between $\A$ and $\B$ coincides $\mu$-a.e. with an su-$\beta$-H\"older 
conjugacy which intertwines their holonomies (see  Definition \ref{def intertwine}).
\end{theorem}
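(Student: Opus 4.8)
The plan is to reduce the problem to a situation already understood for fiber bunched cocycles and then to bootstrap using the flag structure. First I would recall that, by Proposition~\ref{prop distortion}, the hypotheses on $\A$ imply that $\A$ (and hence $\psi\A$, after rescaling) is itself fiber bunched; thus both $\A$ and $\B$ have stable and unstable holonomies $H^{s/u,\A}$ and $H^{s/u,\B}$ along the leaves of $W^s$ and $W^u$, depending su-$\beta$-H\"older continuously on the base point. The first step is the standard one in this circle of ideas: given a $\mu$-measurable conjugacy $C$ with $\B_x = C(fx)\A_x C(x)^{-1}$ a.e., use accessibility together with the absolute continuity of the stable and unstable foliations and the invariance of the conditional measures to show that $C$ can be modified on a null set so that it intertwines the holonomies, i.e. $C(y)\circ H^{s/u,\A}_{x,y} = H^{s/u,\B}_{x,y}\circ C(x)$ whenever $y\in W^{s/u}(x)$. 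This is where the accessibility and center-bunching assumptions enter and where I expect to invoke, or re-prove in the vector-bundle setting, the measurable-rigidity machinery behind \cite[Theorem 4.2]{KS15}; the twisted cohomological equation result advertised in the abstract is presumably the tool that makes this step work in the necessary generality.

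Once $C$ intertwines holonomies, the second step is to show that such a $C$ is automatically su-$\beta$-H\"older. Along each stable (resp. unstable) leaf, the identity $C(y) = H^{s,\B}_{x,y}\circ C(x)\circ H^{s,\A}_{y,x}$ expresses $C$ on the leaf in terms of its value at one point and the H\"older holonomies, so $C$ restricted to any stable or unstable plaque is as regular as the holonomies, namely $\beta$-H\"older, with uniform constants. Combining this over the two transverse foliations and using accessibility (an $su$-path joining any two points, with controlled length) upgrades essential boundedness and the leafwise H\"older bounds to genuine continuity on all of $\M$; this is the usual ``$su$-regularity from leafwise regularity plus accessibility'' argument, and I would cite the version from Section~\ref{su func} rather than redo it.

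The heart of the matter, and the main obstacle, is establishing the essential boundedness and holonomy-invariance of $C$ \emph{before} any regularity is known — the only hypothesis being measurability. The flag \eqref{flagH} is the device for this. Working with $\psi\A$ so that each quotient cocycle $\psi\A^{(i)}$ on $\vv^i/\vv^{i-1}$ is uniformly bounded, I would argue by induction on the length $k$ of the flag. On the top quotient $\E/\vv^{k-1}$ the induced conjugacy is between a uniformly bounded cocycle and (the relevant piece of) the fiber bunched cocycle $\B$, and here the uniform quasiconformality-type estimate — boundedness of $\psi\A^{(i)}$ forces the measurable conjugacy to be essentially bounded, via an invariant-measure / Lyapunov-exponent argument on the associated cocycle $\mathrm{Hom}((\psi\A)^{(i)}, \B)$ together with the fiber bunching of $\B$. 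Then one peels off $\vv^{k-1}$, uses that the conjugacy respects the flag (which itself follows because a measurable $\A$-invariant, $\B$-corresponding subbundle must be recognized by $\B$'s holonomies), and applies the inductive hypothesis; the off-diagonal ``block-triangular'' entries are controlled by the same boundedness-of-quotients estimate applied to the $\mathrm{Hom}$ cocycles between successive quotients, which are again uniformly bounded after the $\psi$-twist. Keeping the estimates uniform across the induction, and handling the subtlety that the measurable invariant subbundles need not a priori be continuous, is the technically delicate point; once boundedness and holonomy-invariance are in hand, the continuity conclusion follows from the two steps above.
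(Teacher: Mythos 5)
Your high-level strategy matches the paper's: exploit the block-triangular structure of $\A$, use Proposition~\ref{prop distortion} for fiber bunching, transport the flag to $\B$ via the conjugacy, and proceed inductively on blocks. But there are two concrete places where the argument as you describe it would not close.

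First, the off-diagonal blocks of $\c$ are \emph{not} controlled by ``the same boundedness-of-quotients estimate applied to the $\mathrm{Hom}$ cocycles between successive quotients.'' If you write $\c$ in blocks $\c^{j,i}$ with respect to the splittings $\vv^i=\V^1\oplus\cdots\oplus\V^i$ and $\mathcal V^i=\VV^1\oplus\cdots\oplus\VV^i$ and equate $(i-\ell,i)$-components of $\B_x\circ\c_x=\c_{fx}\circ\A_x$, you get (equation~\eqref{CD} in the paper) an \emph{inhomogeneous} twisted coboundary equation
$\c^{i-\ell,i}_x=(\B^{i-\ell,i-\ell}_x)^{-1}\circ\c^{i-\ell,i}_{fx}\circ\A^{i,i}_x + D_x$,
where $D_x$ is built from blocks handled at earlier stages of the induction. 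The twist $\F_x(\q)=(\B^{i-\ell,i-\ell}_x)^{-1}\circ\q\circ\A^{i,i}_x$ is indeed uniformly bounded (thanks to the $\psi\A^{(i)}$- and $\psi\B^{(i)}$-invariant metrics), but $D_x$ is a nontrivial su-$\beta$-H\"older forcing term and the unknown $\c^{i-\ell,i}$ is only measurable. This is precisely why Theorem~\ref{twist-meas} (the twisted Livsic result) is needed, and needed \emph{here} --- not, as you suggest, at the preliminary stage of showing $C$ intertwines holonomies. In fact the Lusin--Birkhoff argument of Theorem~\ref{measurable cohomology} that proves holonomy-intertwining directly requires uniform quasiconformality of $\A$, which you do not have globally; it only applies to the diagonal blocks $\c^{i,i}$, which conjugate the quasiconformal $\A^{(i)}$ to $\B^{(i)}$.

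Second, you gloss over how the transported flag $\mathcal V^i_x=\c(x)\vv^i_x$, which is a priori only $\mu$-measurable and $\B$-invariant, becomes su-$\beta$-H\"older. The remark that ``a measurable $\A$-invariant, $\B$-corresponding subbundle must be recognized by $\B$'s holonomies'' is not automatic: the Pollicott--Walkden example \eqref{PWex} has a measurable invariant subbundle that is \emph{not} continuous. The mechanism in the paper is Theorem~\ref{distribution}, which needs $\lambda_+(\B,\mu)=\lambda_-(\B,\mu)$; this in turn comes from Proposition~\ref{prop distortion} (polynomial growth of $\psi\A$, hence one exponent for $\A$) and Lemma~\ref{equal exp} (exponents are preserved by a measurable conjugacy). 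Without this Lyapunov-exponent input, the regularization of the $\B$-flag --- and hence the entire block decomposition for $\c$ --- does not get off the ground.
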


This theorem extends both the partially hyperbolic result \cite[Theorem 4.2]{KS15} for uniformly quasiconformal $\A$ and the hyperbolic result \cite[Theorem 2.1]{KSW}. 

In the hyperbolic case, DeWitt recently showed in \cite{DW} that fiber bunching of $\B$ 
 can be verified if  $\B$ is measurably conjugate to a cocycle $\A$ taking values in a Zimmer block. 
 This assumption on $\A$ 
 is weaker than in Theorem \ref{constant 1exp} and stronger than in Theorem \ref{main}.
 This result strongly relies on hyperbolicity and periodic points.

One of the difficulties in the partially hyperbolic case compared to the hyperbolic one is
obtaining global regularity of conjugacies or invariant objects from (essential) 
regularity along the stable and unstable foliations. 
This step is simple for H\"older regularity in the  hyperbolic case due to the local product structure of 
the stable and unstable foliations. To obtain continuity in the partially hyperbolic case we use results by Avila, Santamaria, and Viana \cite{ASV} for accessible center bunched volume preserving $f$.
For  {\em scalar} cocycles, global H\"older continuity of the conjugacy (with reduced H\"older  exponent) was establishes by Wilkinson \cite{W}. However, accessibility is not known to yield global H\"older continuity of conjugacies or invariant objects for $GL(d,\R)$-valued  cocycles. This creates a mismatch between H\"older input and continuous output of the results,
and hence difficulties in using them repeatedly or inductively, as continuity is not enough to work with.
We overcome these difficulties  by using holonomies and by obtaining the results with 
su-$\beta$-H\"older regularity for both input and output. 
We also give more general versions for various earlier results under the assumptions of
su-$\beta$-H\"older regularity or existence of holonomies.

We also establish a result 
of independent interest on continuity of measurable solutions for twisted vector-valued cohomological
equations over partially hyperbolic systems, Theorem \ref{twist-meas}, which covers the usual
 (untwisted) scalar and vector-valued cocycles as particular cases. This result plays a key role in the proof of Theorem \ref{main}.

\vskip.1cm
The paper is structured as follows. We describe the setting and introduce the terminology in Section 2.
We prove Theorem \ref{main} in Section \ref{cocycle proofs}, and deduce  Theorems  \ref{constant 1exp} and  \ref{constant perturb} in Section \ref{proof of constant}. The results  for twisted cohomological 
equations are stated and proved in Section \ref{twisted}, and those on regularity of invariant
subbudles, Riemannian metrics, and conformal structures in Section \ref{invariant}.


\section{Preliminaries}

 \subsection{Partially hyperbolic diffeomorphisms.} \label{partial}  $\;$
 
\noindent Let $\M$ be a compact connected smooth manifold.
 A diffeomorphism $f$ of $ \M$ is {\em partially hyperbolic} if
there exist a $Df$-invariant splitting of the tangent bundle 
$$T\M =E^s\oplus E^c \oplus E^u$$ with non-trivial $E^s$ and $E^u$, 
 and  a Riemannian 
metric on $\M$ for which one can choose continuous positive 
functions $\nu<1<\hat\nu\,$, $\gamma,$ $\hat\gamma\,$ such that 
for any $x \in \M$ and unit vectors  
$\,\bv^s\in E^s(x)$, $\,\bv^c\in E^c(x)$, and $\,\bv^u\in E^u(x)$
\begin{equation}\label{partial def}
\|Df_x(\bv^s)\| < \nu(x) <\gamma(x) <\|Df_x(\bv^c)\| < \hat\gamma(x) <
\hat\nu(x) <\|Df_x(\bv^u)\|.
\end{equation}
 The sub-bundles $E^s$, $E^u$, and $E^c$ are called, respectively, stable, unstable, and center.
$E^s$ and $E^u$  are tangent to the stable and unstable foliations $W^s$ and $W^u$
 respectively.  We denote by $W^s_{\text{loc}}(x)$ the {\em local stable manifold}, which is
  the ball in $W^s(x)$  centered at $x$ of a sufficiently small fixed radius, in the distance  $\dist_{W^s}$
along the leaf.
 
  An {\it $su$-path} in $\M$ is a concatenation 
 of finitely many subpaths which lie entirely in a single 
 leaf of $W^s$ or  $W^u$. A partially hyperbolic  
 diffeomorphism $f$  is called {\em accessible}  if any two points 
 in $\M$ can be connected by an $su$-path.

 We say that $f$ is {\em volume-preserving} if it has an invariant probability 
measure  $\mu$ in the measure class of a volume induced by a 
Riemannian metric. 

The diffeomorphism $f$ is called {\em center bunched}\,
if the functions  $\nu, \hat\nu, \gamma, \hat\gamma$  can be 
chosen to satisfy
$\gamma^{-1} \hat \gamma<\nu^{-1}$ and
$\gamma^{-1} \hat \gamma<\hat\nu$. This implies that nonconformality 
of $Df|_{E^c}$ is dominated by contraction/expansion in $E^s$/$E^u.$ 

We recall that $f$ is {\em hyperbolic} if $E^c=\bf{0}$.  Hyperbolic diffeomorphisms are trivially center bunched, and accessible by the local product structure of stable and unstable manifolds. So our results apply to hyperbolic volume-preserving diffeomorphisms. 


 \subsection{H\"older continuous vector bundles}\label{bundle} $\;$
 
\noindent We consider  a $d$-dimensional $\beta$-H\"older, $0<\beta \le 1$,
  vector bundle  $P : \E \to \M $. This means that there exists
an open cover $\{ U_i \}_{i=1}^k$ of $\M$ with coordinate systems 
$$
\phi_i : P^{-1} (U_i) \to U_i \times \rd, \quad \phi_i (v) = (P(v),\Phi_i(v))
$$
such that every transition map $\phi_j \circ \phi_i^{-1}$ is a homeomorphism and its
 restriction to the fiber $\Phi_j \circ \Phi_i^{-1}|_{\{x\} \times \R^d}$ depends 
 $\beta$-H\"older continuously on $x$ as a linear map on $\R^d$. 
We can identify $\E$ with a $\beta$-H\"older sub-bundle of a trivial bundle
via 
$$\phi : \E \to \M  \times \R^{kd}\quad\text{with}\quad \phi (v) = (P(v), \rho_1 \Phi_1(v) \times ... \times \rho_k \Phi_k(v)),
$$
where $\{ \rho_i \}$ is a $\beta$-H\"older partition of unity for $\{ U_i \}$.

Using this embedding we equip $\E$ with the induced $\beta$-H\"older  
Riemannian metric, i.e., a family of inner products on the fibers, and
fix an identification $\I_{x,y} :\E_x \to \E_y$ of fibers at 
nearby points. We define the latter as 
$\Pi_y^{-1} \circ \Pi_x$, where $\Pi_x$ is the orthogonal projection in 
$\R^{kd}$ from $\E_x$ to the subspace which is the middle point of
the unique shortest geodesic between $\E_x$ and $\E_y$ in the 
Grassmannian of $d$-dimensional subspaces of $\R^{kd}$. 
The identifications $\{ \I_{x,y} \}$ satisfy $ \I_{x,y}= \I_{y,x}^{-1}$ and
vary $\beta$-H\"older continuously on a neighborhood of the diagonal in $\M \times \M$.


 
 \subsection{su-$\beta$-H\"older functions}\label{su func} $\;$ \\
We say that a function $\psi$ on $\M$ with values in a  metric space is {\em  s-$\beta$-H\"older}
if  $\psi$ is continuous on $\M$ and $\beta$-H\"older along the leaves of the stable foliation $W^s$, in the sense that there exists a constant $K$ such that 
$$d(\psi(x),\psi(y))\leq K\,\dist_{W^s}  (x,y)^{\beta} \;
\text{ for all $x\in \M$ and $y \in W^s_{\text{loc}}(x)$.}
$$
We define u-$\beta$-H\"older functions similarly and say a function is {\em su-$\beta$-H\"older} 
if it is s-$\beta$-H\"older and u-$\beta$-H\"older.

In the bundle setting, we similarly define the notion of an  su-$\beta$-H\"older subbundle 
$\E'$ of a $\beta$-H\"older vector bundle $\E$ by using identifications $\I_{x,y}$, 
 or equivalently using an embedding $\E'\subset \E \subset \M  \times \R^{kd}$ 
 and thus viewing $\E_x'$ as the Grassmannian-valued function.
  Using the embedding we can also define local identifications $\I_{x,y}'$ for $\E'$ 
  as we did for $\E$. They are continuous on a neighborhood of the diagonal in $\M \times \M$ 
  and $\beta$-H\"older along the leaves of $W^s$ and $W^u$ in the above sense. 
  Then for objects on an su-$\beta$-H\"older subbundle we define the notion of being su-$\beta$-H\"older 
using these identifications. 
In particular, using an embedding we can obtain an su-$\beta$-H\"older Riemannian metric on $\E'$.

 \subsection{Linear cocycles} $\;$ \label{lin coc}
 
 \noindent Let $f$ be a diffeomorphism of  $\M$ and let $P : \E \to \M$
be a $\beta$-H\"older vector bundle over $\M$.
A {\it linear cocycle}\, over $f$ is an automorphism of $\E$ that projects to $f$, that is, 
a homeomorphism $\A:\E\to \E$ such that  $\,P\circ \A= f \circ P$ and for each $x\in \M$ the map $\A_x:\E_x\to \E_{fx}$ between the fibers is a linear isomorphism. 
In the case  of a trivial vector bundle $\E=\M \times \rd$, any linear cocycle $\A$ can be identified 
with a $GL(d,\R)$-valued cocycle generated by the function $A(x)=\A_x \in GL(d,\R)$.

\vskip.1cm

We use the following notations for the iterates of $\A$: $\;\; \A_x^0=\Id,\,$ and for $n\in \N$,\,
$$
\begin{aligned}
&\A_x^n = \A_{f^{n-1} x}\circ \cdots \circ \A_{fx}\circ\A_x:\,\E_x\to \E_{f^nx} \quad\text{and}\quad\,\\
&\A_x^{-n}= (\A_{f^{-n} x}^n)^{-1}: \,\E_x\to \E_{f^{-n}x}.
\end{aligned}
$$
\vskip.1cm

The prime examples of linear cocycles over $f$  are the differential $Df$ viewed as an 
automorphism of the tangent bundle $T\M$, and its restrictions to $Df$-invariant 
subbundles $\E'\subset T\M$ such as $E^s$, $E^u$, or $E^c$.  In these examples,
$$
  \A_x=\,D_xf \; \text{ and }\; \A^n_x\,=\,D_xf^n,   \;\quad\text{or }\quad\; \; \A_x=Df|_{\E'(x)} \text{ and }\; \A^n_x\,=Df^n|_{\E'(x)}.
  $$
Since these sub-bundles are H\"older continuous but usually not more regular, the H\"older category
is natural for applications.

\vskip.1cm
A linear cocycle $\A$ is called {\em $\beta$-H\"older} if $\A_x$ depends  $\beta$-H\"older continuously on $x$,
more precisely, if there exist a constant $c$ such that 
for all nearby points $x,y\in \M$
\begin{equation}\label{FHolder}
 \| \A_x - \I_{fx,fy}^{-1} \circ \A_y \circ \I_{x,y}\|  \le c\cdot  \dist (x,y)^{\beta'}
\end{equation}
where  $\| .\|$ is the operator norm. Similarly, we say that $\A$ is {\em su-$\beta$-H\"older}
if it is continuous and satisfies \eqref{FHolder} for points $x$ and $y$ in the same local stable and unstable leaves. This notion is also defined in the same way for a cocycle $\A$ on an 
su-$\beta$-H\"older subbundle $\E'$ of $\E$ by using local identifications $\I_{x,y}'$ on $\E'$.
\vskip.1cm

Finally, we define the notion of conjugacy between linear cocycles.

\begin{definition} \label{cohomology def E}
Let  $\A$ and $\B$  be  linear cocycles over $f$ on vector bundles $\E$ and $\E'$ over $\M$. Let 
 $\L=\L(\E,\E')$ be the bundle  whose fiber $\L_x$ is the space  $L(\E_x,\E'_x)$ of linear operators from
 $\E_x$ to $\E_x'$. A (measurable, continuous)  {\em conjugacy} $C$ between  $\A$ and $\B$  is a 
(measurable, continuous)  section of $\L$ taking values in invertible operators and satisfying 
equation \eqref{C def}.
\end{definition}

\vskip.1cm 

\subsection{Holonomies and fiber bunching} \label{hol and FB}

An important role in the study of cocycles, and in this paper in particular,  is played by holonomies. Their existence was established for fiber bunched cocycles.

\begin{definition} \label{FB}
An su-$\beta$-H\"older linear cocycle $\A$ is called {\em fiber bunched} if there exist constants $\theta<1$ and $K$ such that for all $x\in\M$ and  $n\in \N$,
\begin{equation}\label{weak fiber bunched}
\| \A_x^n\|\cdot \|(\A_x^n)^{-1}\| \cdot  (\nu^n_x)^\beta < K\, \theta^n \;\text{ and}\quad
\| \A_x^{-n}\|\cdot \|(\A_x^{-n})^{-1}\| \cdot  (\hat \nu^{-n}_x)^\beta < K\, \theta^n,
\end{equation}
where 
$ \;\nu$ and $\hat \nu$ are  as in \eqref{partial def}.
\end{definition}

Existence of holonomies was proved for $GL(d,\R)$-valued cocycles  in \cite{AV,ASV} under a stronger fiber bunching assumption, and later extended to bundle setting in \cite{KS13} and to the weaker fiber bunching \eqref{weak fiber bunched} in \cite{S15}. The proofs apply to su-$\beta$-H\"older 
 cocycles  without modifications.

\begin{proposition}  \cite{AV,ASV,KS13,S15} \label{existence of holonomies} $\;$\\
Let $\A$ be an su-$\beta$-H\"older linear cocycle over a partialy hyperbolic diffeomorphism $f:\M \to \M$. If  $\A$ is fiber bunched, 
then for every $x\in \M$ and $y\in W^s(x)$ the limit 
\begin{equation}\label{holonomy def}
 H_{x,y}=H^{\A,s}_{x,y} =\underset{n\to\infty}{\lim} \,(\A^n_y)^{-1} \circ I_{f^nx,f^ny}\circ  \A^n_x ,
\end{equation}
called a {\em stable holonomy of $\A$}, \,exists and satisfies 
\begin{itemize}
\item[(H1)] $H_{x,y}$ is an invertible  linear map from $\E_x$ to $\E_y$;
\vskip.1cm
\item[(H2)]  $H_{x,x}=\Id\,$ and $\,H_{y,z} \circ H_{x,y}=H_{x,z}$,\,\,
which implies $(H_{x,y})^{-1}=H_{y,x};$
\vskip.1cm
\item[(H3)]  $H_{x,y}= (\A^n_y)^{-1}\circ H_{f^nx ,f^ny} \circ \A^n_x\;$ 
for all $n\in \N$;
\vskip.1cm
\item[(H4)] $\| H_{x,y} - I_{x,y} \,\| \leq c\,\dist (x,y)^{\beta},$
 where $c$ is independent of $x$   and $y\in W^s_{\text{loc}}(x).$
\vskip.1cm
\item[(H5)] The map $H^{\A,s}:\;(x,y)\mapsto H^{\A,s}_{x,\, y}$,\, where $x\in \M$ and  $y\in W^s_{\text{loc}}(x)$, is continuous.
\end{itemize}

\vskip.1cm

\noindent The {\em unstable holonomy} 
$$ H^{\A,u}_{x,y} =\underset{n\to\infty}{\lim} \,(\A^{-n}_y)^{-1} \circ I_{f^{-n}x,f^{-n}y}\circ \A^{-n}_x \;\,\text{ for }y\in W^u(x)$$
 also exists and satisfies  similar properties.
    \end{proposition}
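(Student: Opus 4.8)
## Proof Proposal for Proposition \ref{existence of holonomies}

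The plan is to establish existence of the stable holonomy $H_{x,y}$ as the limit in \eqref{holonomy def} by a Cauchy-sequence argument based on fiber bunching, and then verify properties (H1)--(H5). First I would fix $x\in\M$ and $y\in W^s_{\loc}(x)$, and set $H^n = (\A^n_y)^{-1}\circ I_{f^nx,f^ny}\circ \A^n_x$, which is a linear map $\E_x\to\E_y$. The key estimate is on the difference $H^{n+1}-H^n$. Writing $H^{n+1}-H^n = (\A^n_y)^{-1}\big[ \A_{f^ny}^{-1}\circ I_{f^{n+1}x,f^{n+1}y}\circ\A_{f^nx} - I_{f^nx,f^ny}\big]\circ\A^n_x$, the bracketed term is controlled using the su-$\beta$-H\"older property \eqref{FHolder} of $\A$ together with the H\"older variation of the identifications $\I$, applied at the points $f^nx$ and $f^ny$, which lie in the same local stable leaf and satisfy $\dist(f^nx,f^ny)\le C\,(\nu^n_x)\cdot\dist(x,y)$ by the definition of the stable foliation in \eqref{partial def}. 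This yields
\begin{equation}\label{telescope}
\| H^{n+1}-H^n\| \le \| (\A^n_y)^{-1}\| \cdot \| \A^n_x\| \cdot c'\, \big((\nu^n_x)\,\dist(x,y)\big)^\beta.
\end{equation}
Now fiber bunching \eqref{weak fiber bunched} enters: $\|\A^n_x\|\cdot\|(\A^n_x)^{-1}\|\cdot(\nu^n_x)^\beta < K\theta^n$, and since $y\in W^s(x)$ one relates $\|(\A^n_y)^{-1}\|$ to $\|(\A^n_x)^{-1}\|$ up to a bounded factor (the norms along a stable leaf are comparable because $\A$ is su-$\beta$-H\"older and the cocycle over a compact base has bounded distortion along local leaves, a standard consequence one iterates). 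Hence the right side of \eqref{telescope} is summable, bounded by $K'\theta^n\dist(x,y)^\beta$, so $\{H^n\}$ is Cauchy and converges to $H_{x,y}$.

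Next I would verify the listed properties. For (H4), summing \eqref{telescope} from $n=0$ gives $\|H_{x,y}-H^0\| = \|H_{x,y}-I_{x,y}\|\le c\,\dist(x,y)^\beta$ with $c$ uniform, since all constants came from the compact base and the uniform bunching constants. Property (H3) is immediate from the definition: the partial products satisfy $H^n_{x,y}=(\A^m_y)^{-1}\circ H^{n-m}_{f^mx,f^my}\circ\A^m_x$ for $m\le n$, and passing to the limit gives (H3); this in turn shows $H_{x,y}$ does not depend on the choice of base point along the orbit in the sense needed. Property (H2) (the cocycle identity $H_{y,z}\circ H_{x,y}=H_{x,z}$ for $x,y,z$ on a common stable leaf) follows by writing each as a limit and using that $I_{f^nx,f^nz}$ agrees asymptotically with $I_{f^ny,f^nz}\circ I_{f^nx,f^ny}$ up to an error that is $O(\dist(f^nx,f^ny)^\beta)\to 0$ — here one uses the H\"older cocycle-like property of the identifications $\I$ near the diagonal. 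Invertibility (H1) then follows from (H2) since $H_{y,x}\circ H_{x,y}=H_{x,x}=\Id$, which also identifies $(H_{x,y})^{-1}=H_{y,x}$; the identity $H_{x,x}=\Id$ is clear since then every $H^n$ has $I_{f^nx,f^nx}=\Id$ as its middle factor.

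For (H5), the continuity of $(x,y)\mapsto H_{x,y}$ on the local stable leaves: each $H^n$ is continuous in $(x,y)$ because $\A$, its inverse, and the identifications $\I$ are continuous, and the telescoping bound \eqref{telescope} is locally uniform (the constants are uniform over $\M$ and $\dist(x,y)$ is bounded on the local leaf), so the convergence $H^n\to H_{x,y}$ is uniform on compact sets, hence the limit is continuous. The unstable holonomy $H^{\A,u}$ is handled identically by applying the argument to $f^{-1}$, whose stable foliation is $W^u$ and whose relevant contraction rate is $\hat\nu^{-1}$, which is exactly the quantity appearing in the second inequality of \eqref{weak fiber bunched}. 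The main obstacle is the bounded-distortion step used to pass from $\|(\A^n_y)^{-1}\|$ to $\|(\A^n_x)^{-1}\|$ (and more generally to control products of norms of the cocycle over points on the same local leaf): this requires a preliminary lemma showing that for $x$ and $y$ in the same local stable manifold, $\log\|\A^n_x\| - \log\|\A^n_y\|$ and the analogous quantities for inverses stay bounded uniformly in $n$, which one proves by telescoping and using \eqref{FHolder} together with $\dist(f^jx,f^jy)\to 0$ geometrically. This is the part I would write out most carefully; everything else is routine once that estimate is in place.
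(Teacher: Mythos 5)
The paper does not prove this proposition; it cites it from \cite{AV,ASV,KS13,S15} with the remark that those proofs apply to su-$\beta$-H\"older cocycles without modification. So the comparison must be to the standard argument in those references. Your high-level plan---set $H^n=(\A^n_y)^{-1}\circ I_{f^nx,f^ny}\circ\A^n_x$, telescope, sum a geometric series via fiber bunching, and then read off (H1)--(H5)---is exactly that argument in outline.

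There is, however, a genuine gap precisely at the step you flag as ``the part I would write out most carefully.'' You propose first to prove a bounded-distortion lemma, that $\log\|\A^n_x\|-\log\|\A^n_y\|$ and $\log\|(\A^n_x)^{-1}\|-\log\|(\A^n_y)^{-1}\|$ stay bounded for $y\in W^s_{\loc}(x)$, ``by telescoping,'' and then feed it into \eqref{telescope}. This does not work as sketched: the operator norm of a product does not factor as a product of per-step norms, so telescoping the single-step H\"older estimate \eqref{FHolder} does not control the ratio $\|\A^n_x\|/\|\A^n_y\|$. In fact that comparability is essentially equivalent to the uniform boundedness of the partial holonomies $H^n$ themselves (since $\A^n_x = I^{-1}_{f^nx,f^ny}\circ\A^n_y\circ H^n$), so you would be assuming what you are trying to prove.

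The standard fix, and the route taken in the cited references, is to avoid any a priori comparison of norms at two different points and instead bound $\|H^n\|$ directly by a Gronwall-type induction. Your telescope identity gives $H^{n+1}-H^n=(\A^n_y)^{-1}\circ E_n\circ\A^n_x$ with $\|E_n\|\le c'(\nu^n_x\,\dist(x,y))^\beta$. Now substitute the identity $(\A^n_y)^{-1}=H^n\circ(\A^n_x)^{-1}\circ I^{-1}_{f^nx,f^ny}$, which converts the cross-point quantity $\|(\A^n_y)^{-1}\|\cdot\|\A^n_x\|$ into the same-point quantity $\|(\A^n_x)^{-1}\|\cdot\|\A^n_x\|$ multiplied by $\|H^n\|$ and a bounded factor. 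Fiber bunching \eqref{weak fiber bunched} at the single point $x$ then gives
\begin{equation*}
\|H^{n+1}-H^n\|\le \|H^n\|\cdot K'\theta^n\,\dist(x,y)^\beta,\qquad\text{hence}\qquad \|H^{n+1}\|\le\|H^n\|\bigl(1+K'\theta^n\dist(x,y)^\beta\bigr).
\end{equation*}
This yields $\sup_n\|H^n\|\le \|I_{x,y}\|\,e^{K'\dist(x,y)^\beta/(1-\theta)}$ uniformly over $y\in W^s_{\loc}(x)$, after which your Cauchy estimate, (H4), and (H5) go through exactly as you describe, and the bounded-distortion statement you wanted becomes a corollary rather than a prerequisite. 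The same Gronwall argument applied to $(H^n)^{-1}$ gives the bound needed for invertibility in (H1). The remaining items (H2), (H3), and the unstable case via $f^{-1}$ are handled as in your sketch.
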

    
 By  \cite[Proposition 4.2]{KS13}, for a fiber bunched su-$\beta$-H\"older cocycle the
 map $H$ satisfying (H1)-(H5) is unique. It follows that  the stable and unstable holonomies
  do not depend 
 on a particular choice of $\beta$-H\"older local identifications.

 \begin{definition} \label{def intertwine}
A conjugacy $C$ between 
$\A$ and $\B$  {\em intertwines their holonomies}  if
\begin{equation}\label{intertwines}
H_{x,y}^{\B,\,s/u}=C(y)\circ H_{x,y}^{\A,\,{s/u}}\circ C(x)^{-1}\quad \text{for all }x,y \in \M
\text{ with }y\in W^{s/u}(x).
\end{equation}
\end{definition}





\section{Twisted cohomological equation} \label{twisted}

In this section, $f$ is as in Assumptions \ref{PH}, $\E$ is a $\beta$-H\"older
 vector bundle over $\M$, or more generally an su-$\beta$-H\"older subbundle of a
 $\beta$-H\"older vector bundle over $\M$, 
 and  $\F$ is an su-$\beta$-H\"older linear cocycle  on $\E$ over $f$.
We study the cohomological  equation over $f$ twisted by $\F$ for sections of $\E$. 
We will use the main result of this section, Theorem \ref{twist-meas},
 in the inductive process in the proof of Theorem~\ref{main}.
\vskip.1cm

We say that a section $\p : \M \to \E$ is an {\em $\F$-twisted coboundary over $f$} if there exists 
a section $\q : \M \to \E$ satisfying the following twisted cohomological equation
\begin{equation}\label{twisteq}
\p(x)=\q(x) - (\F_x)^{-1} (\q(fx))\quad \text{equivalently}\quad \q(x)=\p(x) + (\F_x)^{-1} (\q(fx)).
\end{equation}

In Theorem \ref{twist-meas} we will establish regularity of a measurable solution of \eqref{twisteq} 
with uniformly bounded twist $\F$, and show its invariance under twisted holonomies, 
which we introduce below.

In the case of the trivial bundle $\E=\M \times \R^d$ and the trivial  twist $\F_x = \Id$, \eqref{twisteq}  is  the usual  vector-valued cohomological equation  $\p(x)=\q(x) - \q(fx)$.  In particular, Theorem \ref{twist-meas}
generalizes the usual measurable Livsic theorem for scalar cocycles in the hyperbolic case
and extends the corresponding partially hyperbolic result in \cite{W}.

\begin{definition} \label{slow}
We say that a linear cocycle $\A:\E\to\E$ is {\em dominated} if there exist numbers $\theta<1$ and $K$  
such that for all $x\in\M$ and $n\in \N$,
\begin{equation}\label{dominated}
\|(\F_x^n)^{-1}\| \cdot  (\nu^n_x)^\beta < K\, \theta^n \;\text{ and}\quad
 \|(\F_x^{-n})^{-1}\| \cdot  (\hat \nu^{-n}_x)^\beta < K\, \theta^n,
\end{equation}
where 
$\nu$ and  $\hat \nu$ are  as in \eqref{partial def}.
We say that $\A$ is {\em uniformly bounded}\, if there exists $K$
such that $\| \F_x^n \| \le K$ for all $x\in \M$ and $n\in \Z$.
\end{definition}

To study equation \eqref{twisteq} we consider the following twisted trajectory sum for $\p$:
\begin{equation}\label{PHI}
\P^n(x)=\, \p(x) + (\F_x)^{-1}(\p (fx))+\dots + (\F^{n-1}_{x})^{-1}(\p(f^{n-1}x)) \in \E_x.
\end{equation}

\begin{proposition} \label{twist hol}
Let $\p:\M\to\E$ be an su-$\beta$-H\"older section and let $\F:\E\to\E$ be an su-$\beta$-H\"older linear cocycle over $f$. Suppose that $\F$ is  dominated and fiber bunched, and let $\H_{y,x}^s=\H_{y,x}^{\F,s}$ be the stable holonomy for $\F$. Then the  limit
$$
\P_{y,x}^s= \P^{\F,\p,s}_{x,y}=\lim _{n \to \infty} (\P^n(x) - \H_{y,x}^s\P^n(y) )
$$
exists  for any $x\in \M$ and $y \in W^s(x)$ and satisfies 
\begin{itemize}
\item[($\P$1)] $\P^s_{y,x}\in \E_x$;
\vskip.1cm
\item[($\P$2)] $\P^s_{x,x}=0\,$ and $\,\P^s_{z,x}= \P^s_{y,x} +\H^s_{y,x}(\P^s_{z,y} )$;
\vskip.1cm
\item[($\P$3)] $\|\P^s_{y,x}\| \le K' d (x,y)^\beta$ where $K'$ is independent of $x\in \M$ and $y \in W^s_{loc}(x)$;
\vskip.1cm
\item[($\P$4)] The map $\P^{s}:\;(x,y)\mapsto \P^{s}_{y,x}$,\, where $x\in \M$ and  $y\in W^s_{\text{loc}}(x)$, is continuous.
 \end{itemize}

\noindent A similar result holds for 
$$\P_{y,x}^u= \lim _{n \to -\infty} (\P^n(x) - \H_{y,x}^u\P^n(y) ).$$
\end{proposition}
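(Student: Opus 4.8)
The plan is to show the sequence $P^n(x) - \H^s_{y,x} P^n(y)$ is Cauchy by comparing consecutive terms and using the cocycle/holonomy relations to ``pull back'' the difference under $f$, so that the dominated and fiber bunched hypotheses provide the needed exponential decay. First I would establish the basic telescoping identity for the twisted trajectory sum: directly from \eqref{PHI}, $\P^{n}(x) = \p(x) + (\F_x)^{-1}(\P^{n-1}(fx))$, and more generally
\begin{equation*}
\P^{m+n}(x) - \P^n(x) = (\F^n_x)^{-1}\bigl(\P^m(f^n x)\bigr).
\end{equation*}
Next, using property (H3) of the stable holonomy, $\H^s_{y,x} = (\F^n_x)^{-1}\circ \H^s_{f^n y, f^n x}\circ \F^n_y$, I would rewrite the difference of the $(m+n)$-th and $n$-th truncations of $\P^n(x) - \H^s_{y,x}\P^n(y)$ as
\begin{equation*}
(\F^n_x)^{-1}\Bigl(\P^m(f^n x) - \H^s_{f^n y, f^n x}\,\P^m(f^n y)\Bigr),
\end{equation*}
so that controlling the tail reduces to bounding this quantity. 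The inner expression is, up to the holonomy twist, a difference of twisted Birkhoff sums of $\p$ along the forward orbits of $x$ and $y$, which are exponentially close since $y \in W^s(x)$; combined with the su-$\beta$-H\"older regularity of $\p$, the su-$\beta$-H\"older regularity of $\F$ (hence of $(\F_x)^{-1}$ and of the identifications), and property (H4) of $\H^s$, one gets a bound of the form $C\,d(f^n x, f^n y)^\beta \le C\,(\nu^n_x)^\beta d(x,y)^\beta$ for the first $m$-independent part, and for the running sum inside $\P^m$ a geometric series whose ratio involves $\|(\F^j_{f^n x})^{-1}\|\,(\nu^j)^\beta$, which is summable by the dominated assumption \eqref{dominated}. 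Applying $(\F^n_x)^{-1}$ outside costs another factor $\|(\F^n_x)^{-1}\|$, and the product $\|(\F^n_x)^{-1}\|\,(\nu^n_x)^\beta$ again decays like $K\theta^n$ by \eqref{dominated}. This shows the sequence is Cauchy, with the limit satisfying the H\"older bound ($\P$3), so existence and ($\P$3) come together.

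For ($\P$1), the limit lies in $\E_x$ because every term $\P^n(x) - \H^s_{y,x}\P^n(y)$ does: $\P^n(x)\in\E_x$ by construction and $\H^s_{y,x}\P^n(y)\in\E_x$ by (H1). Property ($\P$2): $\P^s_{x,x}=0$ is immediate since $\H^s_{x,x}=\Id$ (by (H2)) makes every truncation vanish. The cocycle relation $\P^s_{z,x} = \P^s_{y,x} + \H^s_{y,x}(\P^s_{z,y})$ follows by writing $\P^n(x) - \H^s_{z,x}\P^n(z) = \bigl(\P^n(x) - \H^s_{y,x}\P^n(y)\bigr) + \H^s_{y,x}\bigl(\P^n(y) - \H^s_{z,y}\P^n(z)\bigr)$, using $\H^s_{z,x} = \H^s_{y,x}\circ \H^s_{z,y}$ from (H2), and passing to the limit, with continuity of $\H^s_{y,x}$ as a fixed bounded operator justifying the interchange. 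Property ($\P$4), joint continuity of $(x,y)\mapsto \P^s_{y,x}$ on a neighborhood of the diagonal in the local stable leaves: I would argue that the convergence in the definition is locally uniform in $(x,y)$ — the Cauchy estimates above are uniform once $d(x,y)$ is bounded — and each truncation $\P^n(x) - \H^s_{y,x}\P^n(y)$ is continuous in $(x,y)$ by continuity of $\p$, of $\F$ and its iterates, and of $\H^s$ (property (H5)); a uniform limit of continuous functions is continuous. The unstable statement is proved identically, replacing $f$ by $f^{-1}$, $\nu$ by $\hat\nu^{-1}$, $W^s$ by $W^u$, the forward trajectory sum by the backward one, and using the second inequality in \eqref{dominated} together with the unstable holonomy and its analogue of (H3)--(H5).

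The main obstacle is the convergence estimate: one must carefully organize the double-limit bookkeeping so that the holonomy defect (H4), the H\"older continuity of $\p$ and $\F$, and the two decay conditions in \eqref{dominated} combine correctly, rather than producing a bound that merely stays bounded. The subtle point is that the inner tail $\P^m(f^n x) - \H^s_{f^n y, f^n x}\P^m(f^n y)$ itself needs an $n$-uniform bound that still beats the growth of $\|(\F^n_x)^{-1}\|$; this works because $\P^m$ at the base point $f^n x$ is controlled by $d(f^n x, f^n y)^\beta \le (\nu^n_x)^\beta d(x,y)^\beta$, which supplies the \emph{extra} contraction factor $(\nu^n_x)^\beta$ that pairs with $\|(\F^n_x)^{-1}\|$ in \eqref{dominated} — so the dominated hypothesis is used twice, once for the inner geometric series and once for the outer pull-back, and both uses are essential. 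Keeping track of which norms act on which fibers (and inserting the appropriate identifications $\I_{x,y}$) is the routine-but-delicate part; everything else is formal manipulation of \eqref{PHI} and properties (H1)--(H5).
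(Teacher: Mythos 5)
Your proof is correct and uses essentially the same ingredients as the paper's: property (H3) to move the holonomy past the iterates of $\F$, the su-$\beta$-H\"older bound on $\p$ together with (H4) to control the twisted differences, and the dominated condition \eqref{dominated} to obtain geometric decay. The only organizational difference is that you package the argument as a Cauchy estimate on the tail $\P^{m+n}-\P^n$ (using the identity $\P^{m+n}(x)-\P^n(x)=(\F^n_x)^{-1}(\P^m(f^nx))$), whereas the paper expands $\P^n(x)-\H^s_{y,x}\P^n(y)$ directly as $\sum_{k=0}^{n-1}(\F^k_x)^{-1}[\p(f^kx)-\H^s_{f^ky,f^kx}\p(f^ky)]$ and bounds the $k$-th term by $K\theta^k d(x,y)^\beta$, which yields convergence, ($\P3$), and ($\P4$) in one stroke; your ``inner geometric series'' bound is in effect the paper's termwise estimate applied at the base point $f^nx$, so there is a slight redundancy but no gap.
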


\begin{proof}
Using \eqref{PHI} we expand
$$
\P^n(x) - \H_{y,x}^s \P^n(y) =
\sum_{k=0}^{n-1}\,[(\F^{k}_{x})^{-1}(\p(f^{k}x))-
(\H_{y,x}^s \circ (\F^{k}_{y})^{-1} \circ \H_{f^kx,f^ky}^s) (\H_{f^ky,f^kx}^s\p(f^{k}y))].
$$
Since $\H_{y,x}^s \circ (\F^{k}_{y})^{-1} \circ \H_{f^kx,f^ky}^s= (\F^k_x)^{-1}$ by (H3),\, 
the $k^{th}$ term in the sum equals
  $$
(\F^{k}_{x})^{-1}[\p(f^{k}x))-  (\F^{k}_{x})^{-1}  (\H_{f^ky,f^kx}^s \p(f^{k}y)]=
(\F^{k}_{x})^{-1}\,[ \p(f^{k}x)-\H_{f^ky,f^kx}^s \p(f^{k}y)].
$$

For all $x\in \M$ and $y \in W^s_{loc}(x)$ we have $d(f^{k}x,f^{k}y)\le \nu_x^k \,d(x,y)$ for all $k\ge 0$. Since
 $\p$ is su-$\beta$-H\"older we have
 $$
 \|\p(f^{k}x)- \I_{f^ky,f^kx} \,\p(f^{k}y)\|\le K_1(\nu^k_x d(x,y))^\beta,
 $$
 and since $\H_{f^ky,f^kx}^s$ is $\beta$-H\"older close to $\I_{f^ky,f^kx}$ by (H4), we conclude that
 $$
 \|\p(f^{k}x)-  \H_{f^ky,f^kx}^s \,\p(f^{k}y)\|\le K_2(\nu^k_x\, d(x,y))^\beta.
 $$
Now using the first inequality in \eqref{dominated} we estimate
   $$
   \begin{aligned}
&\|(\F^{k}_{x})^{-1}\,[  \p(f^{k}x)   -\H_{f^ky,f^kx}^s\, \p(f^{k}y)] \,\|  \,\le\,
 \|(\F^{k}_{x})^{-1} \| \cdot \|\p(f^{k}x)-\H_{f^ky,f^kx}^s\,\p(f^{k}y)  \|  \\
 &\le\, \|(\F^{k}_{x})^{-1} \| \cdot  K_2(\nu^k_x\, d(x,y))^\beta \,\le\, K_2L\,\theta^{k} d(x,y)^\beta \quad\text{with }\theta<1.
 \end{aligned}
 $$
We conclude that the series 
$$
\sum_{k=0}^\infty\, [\, (\F^{k}_{x})^{-1}(\p(f^{k}x))-  \H_{y,x}^s (\F^{k}_{y})^{-1}(\p(f^{k}y))\,]
\,=\, \lim _{n \to \infty} (\P^n(x) - \H_{y,x}^s\P^n(y) )
$$
converges uniformly over all $x\in \M$ and $y\in W^s_{\text{loc}}(x)$.
This  yields   existence of $\P_{y,x}^s$ and property $(\P4)$. Further, we can estimate
$$
\| \P^n(x) - \H_{y,x}^s \P^n(y) \|  \le\, \sum_{k=0}^{n-1} K_2 L \, \theta^{k} d(x,y)^\beta \le K' d(x,y)^\beta,
$$
so that the limit satisfies $ \|\P_{x,y}^s\| \le K' d (x,y)^\beta$, which gives $(\P3)$. 
Property $(\P1)$ is trivial and $(\P2)$ follows by taking the limit in
$$ \P^n(x) - \H_{z,x}^s\P^n(z) = (\P^n(x) - \H_{y,x}^s\P^n(y) )+ (H_{y,x}^s\P^n(y) - H_{z,x}^s\P^n(z))=$$ 
$$ = (\P^n(x) - \H_{y,x}^s\P^n(y) )+ H_{y,x}^s(\P^n(y) - H_{z,y}^s\P^n(z)),$$
where we use $H^s_{z,x}=H_{y,x}^s\circ H^s_{z,y}$.
\end{proof}

We now introduce  twisted holonomies,
which we then use  to analyze regularity of solutions of the twisted cohomological equation
 \eqref{twisteq}. These are the maps
 $$\h^s_{x,y}=\h^{\F,\p,s}_{x,y}: \E_x\to \E_y\;\text{  for  $y \in W^s(x)$.}$$
 
\begin{proposition}  \label{twist holonomies} 
Let $\p:\M\to\E$ be an su-$\beta$-H\"older section and let $\F:\E\to\E$ be an su-$\beta$-H\"older linear cocycle over $f$. Suppose that $\F$ is  dominated and fiber bunched, and let $\H_{x,y}^s$ and
$\P_{x,y}^s$ be as in Proposition \ref{twist hol}.  Then the maps
\begin{equation}\label{holonomy twist def}
 \h^s_{x,y}(v)=\H_{x,y}^s (v) +\P_{x,y}^s=\H_{x,y}^s (v) + \lim _{n \to \infty} (\P^n(y) - \H_{x,y}^s\P^n(x) )
\end{equation}
called\,  {\em stable twisted holonomies}, exist for any $x\in \M$ and $y \in W^s(x)$ and satisfy 
\begin{itemize}
\item[($\h$1)] $\h_{x,y}$ is an invertible affine map from $\E_x$ to $\E_y$;
\vskip.1cm
\item[($\h$2)]  $\h_{x,x}=\Id\,$ and $\,\h_{y,z} \circ \h_{x,y}=\h_{x,z}$;\,\,
\vskip.1cm
\item[($\h$3)] The map $\h^{\F,\p,s}:\;(x,y)\mapsto \h^{s}_{x,\, y}$\,  is continuous in $x\in \M$ and  $y\in W^s_{\text{loc}}(x)$.
\end{itemize}
\end{proposition}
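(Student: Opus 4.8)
The plan is to verify the three properties $(\h1)$--$(\h3)$ directly from the definition $\h^s_{x,y}(v)=\H_{x,y}^s(v)+\P_{x,y}^s$, using the already established properties $(H1)$--$(H5)$ of the linear holonomies $\H^s$ and $(\P1)$--$(\P4)$ of the ``twisted trajectory cocycle'' $\P^s$ from Proposition~\ref{twist hol}. Note first that existence of $\h^s_{x,y}$ is immediate: both $\H^s_{x,y}$ and $\P^s_{x,y}$ exist by Proposition~\ref{twist hol}, and the rewriting of $\P^s_{x,y}$ as $\lim_{n\to\infty}(\P^n(y)-\H^s_{x,y}\P^n(x))$ in \eqref{holonomy twist def} is just $(\P2)$ applied with the roles of $x$ and $y$ swapped (equivalently, replacing $\P^s_{y,x}$ by $-\H^s_{x,y}(\P^s_{y,x})=\P^s$ read off the other basepoint), together with $(H2)$; I would spell out this bookkeeping carefully so the indices match what is stated in the proposition.

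For $(\h1)$: an affine map $v\mapsto Lv+b$ with $L$ linear invertible and $b$ a fixed vector is an invertible affine map from $\E_x$ to $\E_y$, with inverse $w\mapsto L^{-1}(w-b)$. Here $L=\H^s_{x,y}$ is an invertible linear map $\E_x\to\E_y$ by $(H1)$ and $b=\P^s_{x,y}\in\E_y$ by $(\P1)$ (read at basepoint $y$), so this is immediate.

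For $(\h2)$: the identity $\h_{x,x}=\Id$ follows since $\H^s_{x,x}=\Id$ by $(H2)$ and $\P^s_{x,x}=0$ by $(\P2)$. For the cocycle relation, I would compute, for $v\in\E_x$,
\begin{equation*}
\h_{y,z}(\h_{x,y}(v))=\H^s_{y,z}\bigl(\H^s_{x,y}(v)+\P^s_{x,y}\bigr)+\P^s_{y,z}
=\H^s_{x,z}(v)+\H^s_{y,z}(\P^s_{x,y})+\P^s_{y,z},
\end{equation*}
using linearity of $\H^s_{y,z}$ and the composition rule $\H^s_{y,z}\circ\H^s_{x,y}=\H^s_{x,z}$ from $(H2)$; then the additivity relation $(\P2)$ (stated there as $\P^s_{z,x}=\P^s_{y,x}+\H^s_{y,x}(\P^s_{z,y})$, to be transcribed into the orientation used in \eqref{holonomy twist def}) identifies $\H^s_{y,z}(\P^s_{x,y})+\P^s_{y,z}$ with $\P^s_{x,z}$, giving $\h_{x,z}(v)$. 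The only care needed here is to keep the subscript orientation of $\P^s$ consistent with the formula \eqref{holonomy twist def}; I would fix one convention at the outset and check $(\P2)$ reads correctly in it.

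For $(\h3)$: continuity of $(x,y)\mapsto\h^s_{x,y}$ for $x\in\M$, $y\in W^s_{\mathrm{loc}}(x)$ is just continuity of $(x,y)\mapsto\H^s_{x,y}$, which is $(H5)$, together with continuity of $(x,y)\mapsto\P^s_{x,y}$, which is $(\P4)$; the map $v\mapsto\H^s_{x,y}v+\P^s_{x,y}$ depends continuously on the pair $(\H^s_{x,y},\P^s_{x,y})$ in the natural topology on affine maps, so the composition is continuous. The parallel statement for unstable twisted holonomies $\h^u_{x,y}$ follows verbatim from the unstable versions of Propositions~\ref{existence of holonomies} and \ref{twist hol}. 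I do not anticipate a genuine obstacle in this proof — all the analytic work (the convergence estimates, the uniform bounds, the H\"older control) was already done in Proposition~\ref{twist hol}; the only thing to be careful about is the index/orientation bookkeeping between $\P^s_{y,x}$ and $\P^s_{x,y}$ so that $(\h2)$ comes out as a clean semigroup relation rather than an apparent mismatch.
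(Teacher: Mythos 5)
Your proposal is correct and takes the same approach as the paper: the paper's own proof states that the proposition ``follows directly from the previous proposition'' and then writes out only the $(\h2)$ computation, which is the same chain of equalities you give (using $(H2)$, linearity of $\H^s$, and $(\P2)$). Your extra care about the index orientation — noting that the $\P^s_{x,y}$ appearing in $\h^s_{x,y}$ has the roles of $x$ and $y$ swapped relative to the $\P^s_{y,x}$ of Proposition~\ref{twist hol} — is the only subtlety the paper leaves implicit, and you handle it correctly.
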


\begin{proof}
This follows directly from the previous proposition. For ($\h$2) we use $(\P2)$:
$$\,\h_{y,z} \circ \h_{x,y}(v)=\h_{y,z} (\H_{x,y}^s (v) +\P_{x,y}^s)=
(\H_{y,z} \circ \H_{x,y}^s) (v) + \H_{y,z}(\P_{x,y}^s)+\P_{y,z}^s=$$
$$=\H_{x,z}^s (v) + \P_{x,z}^s=\h_{x,z}.$$
\end{proof}

Now we formulate and prove the main result of this section.

\begin{theorem} \label{twist-meas}
Let $(f,\mu)$ be as in Assumptions \ref{PH} and let $\E$ be an su-$\beta$-H\"older
subbundle of a $\beta$-H\"older vector bundle over $\M$.  Let $\F:\E\to \E$ be an  
su-$\beta$-H\"older uniformly bounded cocycle over $f$.
Let $\p  : \M \to \E$ be an su-$\beta$-H\"older section, and let $\q:\M\to \E$ be a 
$\mu$-measurable section satisfying 
$$\p(x)=\q(x) - (\F_x)^{-1} (\q(fx)).$$
Then $\q$ is  su-$\beta$-H\"older and invariant under the twisted holonomies, that is,
$$
\q(y)=\h^{\F,\p,s}_{x,y} \,\q(x) \quad \text{for all $x\in X$ and  $y \in W^s(x)$}.
$$

\end{theorem}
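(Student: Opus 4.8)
The plan is to combine a Livsic-type argument along stable and unstable leaves with the accessibility machinery of \cite{ASV}, after passing from the affine twisted holonomies to a linear cocycle on an enlarged bundle. First, a uniformly bounded cocycle $\F$ is automatically dominated and fiber bunched: choosing constants $\nu_0<1<\hat\nu_0$ with $\nu\le\nu_0$ and $\hat\nu\ge\hat\nu_0$ gives $(\nu^n_x)^\beta\le\nu_0^{\beta n}$ and $(\hat\nu^{-n}_x)^\beta\le\hat\nu_0^{-\beta n}$, while $\|\F^{\pm n}_x\|$ and $\|(\F^{\pm n}_x)^{-1}\|$ are bounded by a single constant $K$, so \eqref{dominated} and \eqref{weak fiber bunched} hold; hence $\F$ has holonomies $\H^{s/u}$ and the twisted holonomies $\h^{s/u}$ of Proposition \ref{twist holonomies} exist. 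Next I linearize: on $\tilde\E=\E\oplus(\M\times\R)$ define the su-$\beta$-H\"older cocycle $\tilde\F_x(v,t)=(\F_x(v+t\,\p(x)),\,t)$. By induction $\tilde\F^n_x(v,t)=(\F^n_x(v+t\,\P^n(x)),\,t)$, so $\|\tilde\F^{\pm n}_x\|\cdot\|(\tilde\F^{\pm n}_x)^{-1}\|$ grows at most quadratically in $|n|$ (the off-diagonal block $\F^n_x\P^n(x)$ grows at most linearly since $\F$ is uniformly bounded and $\p$ is bounded); as $(\nu^n_x)^\beta$ decays exponentially, $\tilde\F$ is fiber bunched, hence has holonomies, which one computes to be $\tilde\H^{s/u}_{x,y}(v,t)=(\H^{s/u}_{x,y}v-t\,\P^{s/u}_{x,y},\,t)$. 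Finally, \eqref{twisteq} says exactly that $\tilde\q(x)=(-\q(x),1)$ is a measurable $\tilde\F$-invariant section, $\tilde\F_x\tilde\q(x)=\tilde\q(fx)$; it spans a measurable $\tilde\F$-invariant line bundle $L\subset\tilde\E$, and since $\tilde\F$ and the maps $\tilde\H^{s/u}$ all fix the $\R$-coordinate, for $y\in W^{s/u}(x)$ one has $\q(y)=\h^{s/u}_{x,y}\q(x)$ if and only if $\tilde\H^{s/u}_{x,y}L_x=L_y$.

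\emph{Essential invariance under twisted holonomies.} Iterating \eqref{twisteq} gives $\q(x)=\P^n(x)+(\F^n_x)^{-1}(\q(f^nx))$, and likewise for $y\in W^s_{\text{loc}}(x)$. Subtracting, using $\H^s_{x,y}\circ(\F^n_x)^{-1}=(\F^n_y)^{-1}\circ\H^s_{f^nx,f^ny}$ from (H3), and inserting the definitions of $\P^s_{x,y}$ and $\h^s_{x,y}$, one obtains
$$\q(y)-\h^s_{x,y}\q(x)=\bigl(\P^n(y)-\H^s_{x,y}\P^n(x)-\P^s_{x,y}\bigr)+(\F^n_y)^{-1}\bigl(\q(f^ny)-\H^s_{f^nx,f^ny}\,\q(f^nx)\bigr),$$
where the first term tends to $0$ as $n\to\infty$ by Proposition \ref{twist hol}. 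By Lusin's theorem together with the ergodic theorem and the absolute continuity of the stable foliation — a standard recurrence argument — one finds, for $\mu$-a.e.\ $x$ and conditionally-a.e.\ $y\in W^s_{\text{loc}}(x)$, a compact set $G$ on which $\q$ is uniformly continuous and a sequence $n_k\to\infty$ with $f^{n_k}x,\,f^{n_k}y\in G$. Along such $n_k$ one has $\dist(f^{n_k}x,f^{n_k}y)\le\nu^{n_k}_x\,\dist(x,y)\to0$, so by uniform continuity and boundedness of $\q|_G$, property (H4), and $\|(\F^{n_k}_y)^{-1}\|\le K$ the second term tends to $0$; hence $\q(y)=\h^s_{x,y}\q(x)$. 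Concatenating local stable pieces via ($\h$2) extends this to all $y$ on the stable leaf of $x$, for $x$ in a full-measure set, and the symmetric argument with $n\to-\infty$ handles $\h^u$. Equivalently, $L$ is $\mu$-a.e.\ invariant under the stable and unstable holonomies of the fiber bunched cocycle $\tilde\F$.

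\emph{Globalization and regularity.} Since $f$ is accessible, center bunched, $C^2$ and volume preserving and $\tilde\F$ is fiber bunched, the results of \cite{ASV} (in the form used in the proof of \cite[Theorem 4.2]{KS15}) show that a measurable $\tilde\F$-invariant subbundle which is $\mu$-a.e.\ invariant under the stable and unstable holonomies coincides $\mu$-a.e.\ with a continuous $\tilde\F$-invariant subbundle $\bar L$ that is everywhere invariant under those holonomies; by (H4) and ($\P$3) such a bundle is in fact su-$\beta$-H\"older. Since $\tilde\F$ and $\tilde\H^{s/u}$ preserve $\E\times\{0\}$, the set $\{x:\ \bar L_x\subset\E_x\times\{0\}\}$ is $f$-invariant and saturated by stable and unstable leaves, hence by accessibility is empty or all of $\M$; as $\bar L$ agrees $\mu$-a.e.\ with $L=\R(-\q(\cdot),1)$, which is transverse to $\E\times\{0\}$, this set is empty. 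Thus $\bar L_x$ contains a unique vector $(-\bar\q(x),1)$, defining an su-$\beta$-H\"older section $\bar\q$ of $\E$ with $\bar\q=\q$ $\mu$-a.e. Because $\tilde\F$ fixes the $\R$-coordinate, $\tilde\F$-invariance of $\bar L$ yields $\tilde\F_x(-\bar\q(x),1)=(-\bar\q(fx),1)$, i.e.\ $\bar\q$ solves \eqref{twisteq} everywhere, and holonomy-invariance of $\bar L$ yields $\bar\q(y)=\h^{s/u}_{x,y}\bar\q(x)$ for all $y\in W^{s/u}(x)$. Modifying $\q$ on the null set where it differs from $\bar\q$ completes the proof.

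\emph{Main obstacle.} The decisive step is the globalization: upgrading essential invariance under the two families of twisted holonomies to genuine continuity on all of $\M$. In the hyperbolic case this is immediate from the local product structure of $W^s$ and $W^u$; in the partially hyperbolic setting it relies on the delicate absolute-continuity and center-bunching arguments of \cite{ASV}. The linearization $\F\rightsquigarrow\tilde\F$ is the device that makes those (linear) results applicable to the affine twisted holonomies, and the one technical point there is to verify that $\tilde\F$ remains fiber bunched despite the linear-in-$n$ growth of its off-diagonal block. The Livsic estimate and the concluding su-$\beta$-H\"older bound are routine given the properties of $\H^{s/u}$, $\P^{s/u}$ and $\h^{s/u}$ established earlier in this section.
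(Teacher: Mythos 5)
Your proposal is correct, but it takes a genuinely different route from the paper's proof, and the comparison is instructive.

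The paper's argument, after establishing essential invariance of $\q$ under the twisted holonomies by the Lusin--Birkhoff recurrence argument (which you also carry out, and which needs no appeal to absolute continuity of the stable foliation --- the full-measure set $Y$ already gives all the pairs one needs), applies \cite[Theorem D]{ASV} \emph{directly} to the fiber bundle $\n=\E$ equipped with the affine twisted holonomies $\h^{s/u}_{x,y}$. This is possible because Theorem D is stated for abstract continuous fiber bundles with H\"older homeomorphisms as stable/unstable holonomies, and the affine maps $\h^{s/u}_{x,y}$ qualify: properties ($\h$2), ($\h$3) and ($\P$3) furnish exactly Definition \ref{ASV1}. One then reads off global continuity of $\q$ at once, and the su-$\beta$-H\"older bound follows from (H4) and ($\P$3). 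Your proposal instead linearizes: you pass to the cocycle $\tilde\F$ on $\tilde\E=\E\oplus(\M\times\R)$, verify that the linear (not merely bounded) growth of its off-diagonal block is still killed by the exponential factor in fiber bunching, identify the induced line bundle $L=\R\,(-\q(\cdot),1)$, and apply the ASV-based globalization for measurable invariant subbundles of a fiber bunched linear cocycle with one exponent (Theorem \ref{distribution} in the paper, resting on the same Theorem D applied to a Grassmannian bundle). The transversality-via-accessibility step at the end to extract $\bar\q$ from $\bar L$ is a nice observation that the direct route does not need. In short, both arguments eventually pass through \cite[Theorem D]{ASV}; the paper applies it to $\E$ with affine holonomies, you apply it to a projectivized linear cocycle. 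The paper's route is shorter; yours has the advantage of staying inside the purely linear framework (and, as a byproduct, re-derives Propositions \ref{twist hol}--\ref{twist holonomies} from the uniqueness of holonomies of $\tilde\F$), which is a reasonable choice if one is hesitant about whether ASV's fiber-bundle formalism admits affine holonomies directly.

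One technical point you gloss over: in ``which one computes to be $\tilde\H^{s/u}_{x,y}(v,t)=(\H^{s/u}_{x,y}v-t\,\P^{s/u}_{x,y},\,t)$,'' the naive termwise limit does not immediately justify the identification, because the off-diagonal entry pairs the holonomy approximants $H^n_{x,y}=(\F^n_y)^{-1}\I_{f^nx,f^ny}\F^n_x$ against the linearly growing $\P^n(x)$, so one must use that $\|H^n_{x,y}-\H^s_{x,y}\|$ decays exponentially (which is implicit in the proof of Proposition \ref{existence of holonomies} but not listed among (H1)--(H5)). A cleaner route avoiding this is to check directly that the candidate map satisfies (H1)--(H5) --- for (H3) one uses the cocycle decomposition $\P^m(y)=\P^n(y)+(\F^n_y)^{-1}\P^{m-n}(f^ny)$ and ($\P$2) --- and then invoke uniqueness of holonomies \cite[Proposition 4.2]{KS13}. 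With that fix the argument is complete.
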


\begin{proof}
Clearly, an su-$\beta$-H\"older uniformly bounded cocycle is both dominated and fiber-bunched.
Hence $\F$ has  holonomies, 
and Propositions \ref{twist hol} and \ref{twist holonomies} yield $\P_{x,y}^s=\P^{\F,\p,s}_{x,y}$ and twisted holinomies $\h_{x,y}^s=\h^{\F,\p,s}_{x,y}$.
Iterating \eqref{twisteq} we obtain
$$
\begin{aligned}
 \q(x)& =\p(x) + (\F_x)^{-1} (\q(fx))= \p(x) + (\F_x)^{-1}[\p (fx)+\F_{fx} (\q(f^2x))]=\dots \\
  &= \p(x) + (\F_x)^{-1}(\p (fx))+\dots + (\F^{n-1}_{x})^{-1}(\p(f^{n-1}x))+  (\F_x^n)^{-1} (\q(f^{n}x))\\
  &= \P^n(x)+ (\F_x^n)^{-1} (\q(f^{n}x)).
\end{aligned}
$$

Let $y\in \M$ and $x\in W^s(y)$. Using the equation above for $\q(x)$ and $\q(y)$ we obtain
$$
\q(x)-\H_{y,x}^s \,\q(y)=
\P^n(x) - \H_{y,x}^s \P^n(y) + \Delta_n, \quad \text{where}
$$
$$
 \Delta_n= (\F_x^n)^{-1} (\q(f^{n}x)) -\H_{y,x}^s (\F_y^n)^{-1} (\q(f^{n}y)).
$$
By Proposition \ref{twist hol},  $(\P^n(x) - \H_{y,x}^s\P^n(y) )$ converges to $\P_{y,x}^s$.
\vskip.1cm

Now we show that $\|\Delta_n\|\to 0$ along a subsequence for all $x,y$ in a set of full measure.
First we note that by property (H3) we have
 $\H_{y,x}^s \circ (\F_y^n)^{-1}=(\F_x^n)^{-1} \circ \H_{f^ny,f^nx}^s$. Hence
$$
\Delta_n=
(\F_x^n)^{-1} \left( \q(f^{n}x) -\H_{f^ny,f^nx}^s(\q(f^{n}y)) \right)= (\F_x^n)^{-1} ( \Delta_n'),
$$
where  $\Delta_n'=\q(f^{n}x) -\H_{f^ny,f^nx}^s(\q(f^{n}y))$. By uniform boundedness of $\F$  we obtain
$$
\| \Delta_n\|\le \|(\F_x^n)^{-1} \| \cdot \|\Delta_n'\|\le K  \|\Delta_n'\|.
$$
Since the section $\q:\M\to E$ is $\mu$-measurable, by Lusin's theorem there
exists a compact set $S\subset \M$ with $\mu(S)>1/2$ such that
$\q$ is uniformly continuous and hence bounded on  $S$. Let $Y$ be the set of points in $\M$
for which the frequency of visiting $S$ equals $\mu(S)$.
By Birkhoff Ergodic Theorem, $\mu(Y)=1$.

If $x,y\in Y$, there exists a subsequence $n_i\to \infty$ such that $f^{n_i}x, f^{n_i}y \in S$ for all $i$.
Since  $y \in W^s(x)$, $\,d(f^{n_i}x, f^{n_i}y)\to 0$
 and hence $\|\Delta_{n_i}' \|\to 0$ by uniform continuity and boundedness
of $\q$ on $S$ and property (H4) of $\H^s$. Thus $\|\Delta_{n_i}\| \to 0$ and we obtain 
$$
\q(x)=\H_{y,x}^s \,\q(y) +\P_{y,x}^s=\h^s_{y,x} (\q(y)) \quad\text{ for all $x,y\in Y$ with $x \in W^s(y).$}
$$
Since $\P_{x,y}^s$ and $\H_{x,y}^s$ are $\beta$-H\"older
on $W^s_{\loc}(x)$ by $(\P3)$ and (H4) respectively, we get
$$
\|\q(x)- \I_{y,x}\q(y)\| \le \| (\H_{y,x}^s- \I_{y,x})\q(y)\| +\|\P_{y,x}^s\|\le K'\|\q(y)\| \,d(x,y)^\beta
$$
for all $x,y\in Y$ with $x \in W^s(y).$
This means that $\q$ is essentially stable holonomy invariant and essentially $\beta$-H\"older along 
$ W^s_{\loc}(y)$. Once we show that $\q$ is continuous on $\M$, and hence bounded, this will 
yield that $\q$ is s-$\beta$-H\"older and stable holonomy invariant.
Similar arguments show that for all $x,y\in Y$ with $x \in W^u(y)$ we also have
$$
\q(x)=\H_{y,x}^u \,\q(y) +\P_{y,x}^u=\h^u_{y,x} (\q(y)), 
$$
that is, $\q$ is also essentially unstable holonomy invariant and essentially $\beta$-H\"older 
along $ W^u_{\loc}(x)$. 

To complete the proof it remains to establish global continuity of $\q$ on $\M$.
We will use the following results from \cite{ASV}, which we formulate  using our notations.

\begin{definition}\cite[Definition 2.9]{ASV}  \label{ASV1}
Let $(\M,f)$ be a partially hyperbolic system,
and let $\n$ be a continuous fiber bundle over $\M$.
 A {\em stable holonomy} on $\n$ is a family of $\beta$-H\"older homeomorphisms $h^s_{x,y} : \n_x\to \n_y$ with uniform $\beta>0$, 
 defined for all $x, y$ in the same stable leaf of $f$ and satisfying
\begin{itemize}
\item[(a)]  $h^s_{y,z} \circ h^s_{x,y} = h^s_{x,z}$ and $h^s_{x,x} = \Id$,

\item[(b)]   the map $(x, y, \eta )\mapsto h^s_{x,y}(\eta)$  is continuous when $(x, y)$ varies 
in the set of pairs of  points in the same local stable leaf.
\end{itemize}
\end{definition}
\noindent Unstable holonomy is defined similarly, for pairs of points in the same unstable leaf.

We take $\n=\E$ and the holonomy maps  $h^s_{x,y}=\h^s_{x,y}: \E_x\to \E_y$. 
 Properties ($\h$2) and ($\h$3) of Proposition \ref{twist holonomies} yield 
 Properties (a) and (b) of the definition.
 By ($\h$1),  maps  $\h^s_{x,y}$ are invertible affine and hence are Lipschitz homeomorphisms.  
The argument above shows that section $\q$ is bi-essentially invariant in the following sense.

\begin{definition}\cite[Definition 2.10]{ASV} \label{ASV2}
A measurable section $\Psi : \M\to \n$ of the fiber bundle $\n$
 is called $s$-invariant if $h^s_{x,y} (\Psi(x))= \Psi(y)$
 for every $x, y$ in the same stable leaf
and {\em essentially $s$-invariant} if this relation holds restricted to some full measure subset. 
The definition of $u$-invariance is analogous. Finally, $\Psi$ is {\em bi-invariant} if it is both 
$s$-invariant and $u$-invariant, and it is {\em bi-essentially invariant} if it is both essentially $s$-invariant 
and essentially $u$-invariant.
\end{definition}

 A set in $\M$ is called {\em bi-saturated} if it consists of full stable and unstable leaves.
 By remark after Definition 2.10 in \cite{ASV} every Hausdorff topological space with a countable 
basis of topology is {\em refinable} and thus 
 Theorem D below applies and yields  that, up to modification on a set of measure zero, 
 $\q$ is continuous on  $\M$.

\begin{theorem}\cite[Theorem D]{ASV}  \label{ASVD}
 Let $f:\M\to \M$ be a $C^2$ partially hyperbolic 
center bunched  diffeomorphism preserving a volume $\mu$, and let $\n$ be 
a continuous fiber bundle with stable and unstable holonomies and with refinable fiber. 
Then, 
\begin{itemize}
\item[(a)] for every bi-essentially invariant section $\Psi : \M\to \n$, there exists a bi-saturated 
set $\M_\Psi$ with full measure, and a bi-invariant section $\tilde \Psi : \M_\Psi \to \n$ that 
coincides with $\Psi$ at $\mu$ almost every point.
\vskip.1cm
\item[(b)]  if $f$ is  accessible then $\M_\Psi=\M$ and $\tilde \Psi$ is continuous.
\end{itemize}
\end{theorem}

\noindent This completes the proof  of Theorem \ref{twist-meas}.  \end{proof}

 
 \section{Proof of Theorem \ref{main}} \label{cocycle proofs}

\subsection{Continuity of measurable conjugacy in uniformly quasiconformal case} 
\label{proof of thm QC}
An important ingredient in the proof of Theorem \ref{main} is the following result, 
which extends \cite[Theorem 4.2]{KS15}. We recall that a cocycle $\A$ is 
{\em uniformly quasiconformal}  if $\,\| \A^n_x\|\cdot \|( \A^n_x)^{-1}\|$ is uniformly bounded in $x\in \M$ and  $n\in \Z$.  

\begin{theorem}   \label{measurable cohomology}  
Let $(f,\mu)$ be as in Assumptions \ref{PH}.
\vskip.1cm 
\noindent {\bf (i) Continuous version.}  Let $\E$ and $\E'$ be continuous vector bundles 
over $\M$, and let  $\A$ and $\B$ be continuous linear cocycles over $f$ on $\E$ and $\E'$ respectively.
 Suppose that $\A$ and $\B$ have stable and unstable holonomies satisfying (H1,2,3,5) of Proposition \ref{existence of holonomies}, and $\A$ is uniformly quasiconformal.
Then any $\mu$-measurable conjugacy between $\A$ and $\B$ coincides on a set of full 
measure with a continuous conjugacy which intertwines the  holonomies of $\A$ and $\B$.
\vskip.1cm 

\noindent {\bf (ii) su-H\"older version.}
 Let $\E$ and $\E'$ be 
su-$\beta$-H\"older subbundles of $\beta$-H\"older vector bundles 
over $\M$. Let $\A$ be a uniformly quasiconformal su-$\beta$-H\"older linear cocycle over $f$ on $\E$.
Let $\B$ be an su-$\beta$-H\"older  fiber bunched linear cocycle over $f$ on $\E'$  or, more generally, a continuous  linear cocycle with holonomies as in Proposition \ref{existence of holonomies}.
Then any $\mu$-measurable conjugacy between $\A$ and $\B$ coincides on a set of full measure 
with an su-$\beta$-H\"older conjugacy.
\end{theorem}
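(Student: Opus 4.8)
The plan is to regard the conjugacy $C$ as an invariant section of the linear cocycle $\mathcal{C}$ that $\A$ and $\B$ induce on the bundle $GL(\E,\E')\subset\L=\L(\E,\E')$ of invertible operators, to prove that $C$ is essentially invariant under the holonomies of $\mathcal{C}$ using uniform quasiconformality of $\A$, and then to pass from essential to global regularity via the results of \cite{ASV}. Concretely, set $\mathcal{C}_x(L)=\B_x\circ L\circ\A_x^{-1}$, so that $\mathcal{C}_x^n(L)=\B_x^n\circ L\circ(\A_x^n)^{-1}$ and \eqref{C def} says exactly that $C$ is a $\mu$-a.e.\ invariant section of $\mathcal{C}$ with invertible values, $C(fx)=\mathcal{C}_x(C(x))$. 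The stable and unstable holonomies of $\A$ and $\B$ induce holonomies $H^{\mathcal{C},s/u}_{x,y}(L)=H^{\B,s/u}_{x,y}\circ L\circ(H^{\A,s/u}_{x,y})^{-1}$ on $GL(\E,\E')$; these are fiber-preserving homeomorphisms satisfying (H1,2,3,5), and in the su-H\"older setting they also satisfy (H4) with a uniform exponent, so that $GL(\E,\E')$ is a continuous fiber bundle over $\M$ with stable and unstable holonomies and refinable (smooth, second-countable) fibers in the sense of \cite{ASV}.

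For essential holonomy invariance, iterating \eqref{C def} gives $\B_x^n=C(f^nx)\circ\A_x^n\circ C(x)^{-1}$, hence the identity $(\B_x^n)^{-1}=C(x)\circ(\A_x^n)^{-1}\circ C(f^nx)^{-1}$, which is precisely what lets the single quasiconformality bound on $\A$ control $(\mathcal{C}_x^n)^{-1}$ although no growth bound is assumed on $\B$. Fix $x$ and $y\in W^s(x)$. Using $C(x)=(\mathcal{C}_x^n)^{-1}(C(f^nx))$ and the equivariance $H^{\mathcal{C},s}_{y,x}\circ(\mathcal{C}_y^n)^{-1}=(\mathcal{C}_x^n)^{-1}\circ H^{\mathcal{C},s}_{f^ny,f^nx}$ (a consequence of (H3) for $\A$ and $\B$), one obtains
$$
C(x)-H^{\mathcal{C},s}_{y,x}(C(y))=(\mathcal{C}_x^n)^{-1}(D_n),\qquad D_n:=C(f^nx)-H^{\B,s}_{f^ny,f^nx}\circ C(f^ny)\circ H^{\A,s}_{f^nx,f^ny},
$$
and substituting $(\B_x^n)^{-1}=C(x)(\A_x^n)^{-1}C(f^nx)^{-1}$ yields
$$
\|(\mathcal{C}_x^n)^{-1}(D_n)\|\le\|C(x)\|\cdot\|(\A_x^n)^{-1}\|\cdot\|\A_x^n\|\cdot\|C(f^nx)^{-1}\|\cdot\|D_n\|\le K\,\|C(x)\|\cdot\|C(f^nx)^{-1}\|\cdot\|D_n\|,
$$
with $K$ the quasiconformality constant of $\A$. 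By Lusin's theorem pick a compact $S$ with $\mu(S)>1/2$ on which $C$ and $C^{-1}$ are uniformly continuous, hence bounded; since $f$ is ergodic (being accessible, center bunched and volume preserving), for $\mu$-a.e.\ $x$ and a.e.\ $y\in W^s(x)$ there is a subsequence $n_i\to\infty$ with $f^{n_i}x,f^{n_i}y\in S$. Along it $\|C(f^{n_i}x)^{-1}\|$ stays bounded, while $d(f^{n_i}x,f^{n_i}y)\to0$ forces $H^{\A,s}_{f^{n_i}x,f^{n_i}y},H^{\B,s}_{f^{n_i}y,f^{n_i}x}\to\Id$ (by (H2), (H5) and compactness of $S$) and $\|C(f^{n_i}x)-\I_{f^{n_i}y,f^{n_i}x}C(f^{n_i}y)\|\to0$ (uniform continuity of $C$ on $S$), so $\|D_{n_i}\|\to0$ and therefore $C(x)=H^{\mathcal{C},s}_{y,x}(C(y))$. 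The same argument along unstable leaves shows $C$ is essentially bi-invariant under the holonomies of $\mathcal{C}$, i.e.\ the intertwining relation \eqref{intertwines} holds $\mu$-a.e.

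Thus $C$ is a measurable, essentially bi-invariant section of $GL(\E,\E')$, and \cite[Theorem D]{ASV} --- in the continuous version, its analogue for continuous holonomies --- produces a globally bi-invariant section $\tilde C$ that is continuous on $\M$ (here accessibility enters) and equals $C$ $\mu$-a.e. Since $\tilde C=C$ on a dense set and both sides of \eqref{C def} are continuous in $x$, $\tilde C$ satisfies \eqref{C def} for all $x$; its values are invertible everywhere because invertibility holds on the full-measure set $\{\tilde C=C\}$ and propagates along $su$-paths through the invertible holonomies, hence to all of $\M$ by accessibility. So $\tilde C$ is a continuous conjugacy, and bi-invariance is exactly the statement that it intertwines the holonomies of $\A$ and $\B$, proving (i). For (ii) the holonomies also satisfy (H4), so from $\tilde C(y)=H^{\B,s}_{x,y}\circ\tilde C(x)\circ(H^{\A,s}_{x,y})^{-1}$ and boundedness of $\tilde C$ on the compact manifold $\M$ one gets $\|\tilde C(y)-\I_{x,y}\tilde C(x)\|\le K'\,d(x,y)^\beta$ for $y\in W^s_{\loc}(x)$, and likewise along $W^u$; hence $\tilde C$ is su-$\beta$-H\"older.

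I expect the two delicate points to be: exploiting the identity $(\B_x^n)^{-1}=C(x)(\A_x^n)^{-1}C(f^nx)^{-1}$ so that the lone quasiconformality constant of $\A$ tames the a priori unbounded operators $(\mathcal{C}_x^n)^{-1}$; and the passage from leafwise invariance to global continuity, which goes only through the \cite{ASV} machinery and delivers mere continuity on $\M$ --- this is exactly why the output is su-$\beta$-H\"older rather than genuinely $\beta$-H\"older, and why the theorem is formulated to carry both holonomy-intertwining and su-H\"older data so that it can be reused inductively, as in the proof of Theorem~\ref{main}.
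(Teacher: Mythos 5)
Your proposal is correct and follows essentially the same route as the paper's proof. Both arguments establish essential holonomy-invariance of the measurable conjugacy via the same chain of ideas: Lusin's theorem to produce a compact set $S$ of measure $>1/2$ on which $C$ (hence $C^{-1}$) is uniformly continuous; Birkhoff/recurrence to find a subsequence $n_i$ with $f^{n_i}x, f^{n_i}y\in S$ for $x,y$ in a full-measure set on the same stable leaf; the conjugacy relation to trade the unbounded $(\B_x^n)^{-1}$ for $\|(\A_x^n)^{-1}\|\cdot\|\A_x^n\|$, which quasiconformality of $\A$ controls; and then \cite[Theorem D]{ASV} (with its refinability hypothesis) to upgrade bi-essential invariance to global continuity, with (H4) yielding the su-$\beta$-H\"older conclusion in part (ii). The only real difference is cosmetic: you formulate the estimate as an exact identity $C(x)-H^{\mathcal{C},s}_{y,x}(C(y))=(\mathcal{C}_x^n)^{-1}(D_n)$ for the induced cocycle $\mathcal{C}$ on $\L(\E,\E')$ using holonomies $H^{\mathcal{C},s}$, whereas the paper expands $(\B^n_y)^{-1}\circ\I\circ\B^n_x$ using the identifications $\I$ and passes to the limit; these are the same computation written in slightly different bookkeeping, and your version is a touch cleaner. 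You also make explicit the ergodicity of $f$ (Burns--Wilkinson), which the paper uses implicitly when invoking Birkhoff.
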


\begin{proof}
Recall that $\L=\L(\E,\E')$ is the vector bundle with fiber $\L_x=L(\E_x,\E'_x)$.
Let $C$ be a $\mu$-measurable conjugacy between $\A$ and $\B$, that is, a 
$\mu$-measurable section  of $\L$ taking values in invertible linear operators 
and satisfying  
$$
\B_x=C(f x) \circ \A_x \circ  C(x)^{-1} \;\text{ for  $\mu$ almost every $x$.}
$$
The main part of the proof is  showing that $C$ intertwines the stable holonomies of $\A$ and $\B$ on a set of full measure.

 Since $C$ is $\mu$-measurable and the bundle $\L$ has countable 
basis of topology, by Lusin's theorem there 
exists a compact set $S\subset \M$ with $\mu(S)>1/2$ such that 
$C$ is uniformly continuous on  $S$. 
Let $Y$ be the set of points in $\M$
for which the frequency of visiting $S$ equals $\mu(S)$.
By Birkhoff ergodic theorem, $\mu(Y)=1$. 
\vskip.1cm

Suppose that $x,y\in Y$ and  $y \in W^s(x)$. Then 
\begin{equation} \label{meascoh1}
\begin{aligned}
&(\B^n_y)^{-1}\circ \I_{f^{n}x f^{n}y}\circ \B^n_x= \\
&=\left( C(f^ny)\circ \A^n_y\circ C(y)^{-1}\right)^{-1} \circ  \I_{f^{n}x, f^{n}y} \circ
C(f^nx)\circ  \A^n_x\circ  C(x)^{-1} \\
&= C(y) \circ (\A^n_y)^{-1}\circ  C(f^n y)^{-1} \circ  \I_{f^{n}x, f^{n}y}\circ 
 C(f^nx)\circ \A^n_x \circ C(x)^{-1}= \\
&= C(y) \circ (\A^n_y)^{-1}  \circ ( \I_{f^{n}x, f^{n}y} +\Delta_n) \circ \A^n_v \circ C(x)^{-1}= \\
&= C(y) \circ (\A^n_y)^{-1}  \I_{f^{n}x ,f^{n}y} \circ \A^n_x\circ C(x)^{-1} + 
C(y)\circ (\A^n_y)^{-1} \circ \Delta_n \circ \A^n_x \circ C(x)^{-1} . 
\end{aligned}
\end{equation}
We will show that the last term tends to 0 along a subsequence
 $\{n_i\}$ such that $f^{n_i}x, f^{n_i}y \in S$ for all $i$.
Since $x,y \in Y$, such a subsequence exists by the choice of $Y$.
First we note that for the map 
$$\Delta_n=C(f^n y)^{-1} \circ  \I_{f^{n}x, f^{n}y}\circ 
 C(f^nx)-\I_{f^{n}x, f^{n}y}: \,\,\E_{f^nx}\to \E_{f^ny}
 $$
we have $\| \Delta_{n_i} \| \to 0$ as $i\to\infty$ since $\,\dist (f^{n_i}x, f^{n_i}y)\to 0$
for $y \in W^s(x)$.
This follows from uniform continuity of $C$ on the compact set $S$. 

\vskip.1cm

Since the  norms of stable holonomies are uniformly bounded 
 over pairs of points in local stable leaves, and since $\A$ is uniformly quasiconformal, we obtain 
$$
\begin{aligned}
&\| (\A^n_y)^{-1}\| \cdot \| \A^n_x \| \,\le\,   \| (\A^n_y)^{-1}\| \cdot \|H_{f^ny,f^nx}^{\A,s}\| \cdot \| \A^n_y \| \cdot \|H_{x,y}^{\A,s}\| \,\le \\
& \| (\A^n_y)^{-1}\| \cdot K_1 \| \A^n_y \|
\,\le\,  K_1 \, K_2 \quad\text{for all $x\in \M$ and $y\in W^s_{\loc}(x)$.}
\end{aligned}
$$
 Now it follows that 
$$
\| C(y)\circ (\A^{n_i}_y)^{-1} \circ \Delta_{n_i} \circ \A^{n_i}_x \circ C(x)^{-1} \| \le K_1 \, K_2 \,  \| \Delta_{n_i} \| 
\to 0 \quad\text{ as }i\to\infty.
$$
Passing to the limit in \eqref{meascoh1}  along the sequence $n_i$ 
we obtain that $C$ intertwines the stable holonomies $H^{\A,s}$ and $H^{\B,s}$ 
on a set of full measure:
\begin{equation}\label{full measure}
H_{x,y}^{\B,s}=C(y)\circ H_{x,y}^{\A,s} \circ C(x)^{-1}
\quad\text{for all } x,y\in Y \text{ such that }y\in W^s(x).
\end{equation}
or equivalently
\begin{equation}\label{full measure2}
C(y) =H_{x,y}^{\A,s}\circ C(x)\circ (H_{x,y}^{\B,s})^{-1}
\quad\text{for all } x,y\in Y \text{ such that }y\in W^s(x).
\end{equation}
Similarly, we  obtain that $C$ intertwines the unstable holonomies $H^{\A,u}$ and $H^{\B,u}$ 
on a set of full measure.
Together these imply that $C$ is a bi-essentially invariant section, in the sense of Definition \ref{ASV2},
of the bundle $\n=\L$ with stable holonomy maps  $h^s_{x,y}: \L_x\to \L_y$  defined as
$$ h^s_{x,y} (C)=H_{x,y}^{\A,s}\circ C\circ (H_{x,y}^{\B,s})^{-1}$$
and similarly defined unstable holonomies. Properties (H2) and (H5) of Proposition \ref{existence of holonomies} imply that these holonomies satisfy Properties (a) and (b) of Definition \ref{ASV1}.
Also, the maps  $h^s_{x,y}$ are  invertible linear and hence are Lipschitz homeomorphisms.  
 Since the space $\L_x=L(\E_x,\E'_x)$ is Hausdorff  with a countable 
basis of topology, it is refinable and thus Theorem \ref{ASVD} applies and yields that, 
up to modification on a set of measure zero, $C$ is continuous on  $\M$. 
Now \eqref{full measure} shows that $C$ intertwines the holonomies of $\A$ and $\B$ 
everywhere on $\M$. This completes the proof of the first part of the theorem.
\vskip.2cm 
In the second part, since $\A$ is su-$\beta$-H\"older, uniform quasiconformality gives fiber bunching,
and hence existence of holonomies by Proposition \ref{existence of holonomies}. The cocycle $\B$
also has holonomies by Proposition \ref{existence of holonomies} or by the assumption. Thus the
first part applies and yields that the conjugacy $C$ is continuous and intertwines the  holonomies of $\A$ and $\B$. The latter means that  \eqref{full measure2} holds everywhere,  and it follows  that $C$ is s-$\beta$-H\"older. Indeed, Proposition \ref{existence of holonomies} (H4) gives $\beta$-H\"older continuity
 of $H^{\A,s}$ and $H^{\B,s}$ along $W^s$, which yields that of $C$. Similarly,  $C$ is also 
 u-$\beta$-H\"older and thus su-$\beta$-H\"older.
\end{proof}

\subsection{Regularity results for measurable invariant structures.}
\label{invariant} $\;$\\
In this section we give more general versions of earlier results on regularity of measurable 
invariant subbudles, Riemannian metrics, and conformal structures for linear cocycles.
We will use these results in the proof of Theorem \ref{main}
 
We denote by  $\lambda_+(\A,\mu)$ and $\lambda_-(\A,\mu)$ the largest and smallest Lyapunov 
exponents of a linear cocycle $\A$ with respect to $\mu$, given by the Oseledets Multiplicative Ergodic Theorem. 
For  $\mu$ almost all  $x\in \M$, they equal to the following limits
\begin{equation} \label{exponents}
\lambda_+(\A,\mu)= \lim_{n \to \infty} n^{-1} \ln \| \A_x ^n \| 
\quad \text{and}\quad
\lambda_-(\A,\mu)=  \lim_{n \to \infty} n^{-1}  \ln \| (\A_x ^n)^{-1} \|^{-1}. 
\end{equation}
A cocycle $\A$ {\em has one exponent}\, with respect to $\mu$ if 
$\lambda_+(\A,\mu)=\lambda_-(\A,\mu)$. We note that a cocycle with more than one Lyapunov exponent may have measurable invariant  sub-bundles which are not continuous. In particular, the Lyapunov 
sub-bundle for the negative Lyapunov exponent of cocycle $\A$ as in \eqref{PWex} is measurable
but not continuous, see \cite[Example 2.9]{S13}.
In contrast, for cocycles with one Lyapunov exponent we have

\begin{theorem} \label{distribution}
Let $(f,\mu)$ be as in Assumptions \ref{PH}, let $\E$ be an su-$\beta$-H\"older
subbundle of a $\beta$-H\"older vector bundle over $\M$, and let $\B$ be a fiber bunched su-$\beta$-H\"older linear cocycle over $f$ on $\E$. If  $\lambda_+(\B,\mu)=\lambda_-(\B,\mu)$ then any $\mu$-measurable $\B$-invariant subbundle $\E'$ of $\E$  coincides $\mu$-a.e. with an su-$\beta$-H\"older
sub-bundle invariant under $\B$ and under its holonomies.
\end{theorem}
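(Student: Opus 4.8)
The plan is to view the measurable invariant subbundle $\E'$ as a measurable invariant section of a Grassmann bundle, deduce from the one-exponent hypothesis together with the invariance principle of Avila, Santamaria and Viana that this section is essentially invariant under the induced stable and unstable holonomies, then pass through Theorem~\ref{ASVD} to obtain a continuous holonomy-invariant subbundle, and finally bootstrap its regularity to su-$\beta$-H\"older and verify $\B$-invariance by a density argument. The main obstacle is establishing the essential holonomy invariance: this is where, and essentially only where, the equality $\lambda_+(\B,\mu)=\lambda_-(\B,\mu)$ is used.

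First I would reduce to $\mu$ ergodic: over the ergodic decomposition $\lambda_\pm(\B,\mu)=\int\lambda_\pm(\B,\mu_\omega)\,d\omega$ by dominated convergence, so $\mu$-a.e.\ component has a single exponent and one glues the conclusions. Let $k'=\dim\E'_x$, which is $f$-invariant hence $\mu$-a.e.\ constant, and let $\mathcal G=\mathcal G_{k'}(\E)$ be the bundle over $\M$ whose fiber $\mathcal G_x$ is the Grassmannian of $k'$-dimensional subspaces of $\E_x$. Since $\E$ is continuous, $\mathcal G$ is a continuous fiber bundle with compact, hence refinable, fibers; the cocycle $\B$ acts on it by $V\mapsto\B_x(V)$, and $H^{\B,s}$, $H^{\B,u}$ induce holonomies $h^{s/u}_{x,y}(V)=H^{\B,s/u}_{x,y}(V)$ on $\mathcal G$. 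Exactly as in the proofs of Theorems~\ref{twist-meas} and \ref{measurable cohomology}, properties (H2), (H4), (H5) of Proposition~\ref{existence of holonomies} show that $h^s,h^u$ satisfy Definition~\ref{ASV1}. The subbundle $\E'$ is the same datum as a $\mu$-measurable $\B$-invariant section $\Psi:x\mapsto\E'_x$ of $\mathcal G$, equivalently a $\B$-invariant probability $\hat\mu=\Psi_*\mu$ on $\mathcal G$ projecting to $\mu$.

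The key step is to show that $\Psi$ is bi-essentially invariant (Definition~\ref{ASV2}) for $h^s,h^u$, i.e.\ $H^{\B,s/u}_{x,y}(\E'_x)=\E'_y$ for all $x,y$ in a common stable (resp.\ unstable) leaf and in a set of full $\mu$-measure. Since $\B$ has a single Lyapunov exponent $\lambda$ with respect to $\mu$, so do $\B|_{\E'}$ and the induced cocycle on $\E/\E'$, and so does $\wedge^{k'}\B$ (namely $k'\lambda$); although $\wedge^{k'}\B$ need not be fiber bunched, it carries holonomies $\wedge^{k'}H^{\B,s/u}$ induced from those of $\B$, which is all the invariance principle requires. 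Applying the invariance principle of \cite{AV,ASV} to the $\mathbb P(\wedge^{k'}\B)$-invariant measure corresponding to $\hat\mu$ (equivalently, working with $\hat\mu$ on $\mathcal G$ directly), one concludes that its disintegration along fibers is invariant under both the stable and the unstable holonomies, which is precisely the essential $s$- and $u$-invariance of $\Psi$. This is the step I expect to be hardest: an attempt to imitate the proof of Theorem~\ref{measurable cohomology}, comparing $H^{\B,s}_{x,y}(\E'_x)$ with $\E'_y$ along return times of $x,y$ to a Lusin set on which $\E'$ is uniformly continuous, runs into the difficulty that $\B$, unlike the uniformly quasiconformal cocycle there, has only subexponential conformal distortion $\|\B^n_x\|\cdot\|(\B^n_x)^{-1}\|$, which need not be bounded, so the error term cannot be absorbed; the one-exponent hypothesis is exactly what bridges this gap, and the invariance principle packages the needed argument.

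Granted bi-essential invariance, Theorem~\ref{ASVD} applies --- $f$ is $C^2$, center bunched, volume preserving and \emph{accessible}, and $\mathcal G$ has stable and unstable holonomies with refinable fibers --- and yields a continuous bi-invariant section $\tilde\Psi$ of $\mathcal G$ agreeing with $\Psi$ $\mu$-a.e., i.e.\ a continuous subbundle $\tilde\E'\subset\E$ with $H^{\B,s/u}_{x,y}(\tilde\E'_x)=\tilde\E'_y$ for all $x,y$ in a common stable (resp.\ unstable) leaf. It remains to improve the regularity and to check $\B$-invariance. For $y\in W^s_{\loc}(x)$, the identity $\tilde\E'_y=H^{\B,s}_{x,y}(\tilde\E'_x)$ together with $\|H^{\B,s}_{x,y}-\I_{x,y}\|\le c\,\dist(x,y)^\beta$ from (H4) gives, via the embedding $\E\subset\M\times\R^{kd}$, that $\tilde\E'$ is $\beta$-H\"older along $W^s_{\loc}$; the same estimate along unstable leaves, with the global continuity already furnished by Theorem~\ref{ASVD}, shows $\tilde\E'$ is su-$\beta$-H\"older. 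Finally, $x\mapsto\B_x(\tilde\E'_x)$ and $x\mapsto\tilde\E'_{fx}$ are continuous families of $k'$-planes in $\E_{fx}$ that agree on the set of full $\mu$-measure where $\tilde\E'=\E'$ and $\B_x(\E'_x)=\E'_{fx}$; since $\mu$ has full support this set is dense, so they agree for all $x$, i.e.\ $\tilde\E'$ is $\B$-invariant. Together with its holonomy invariance this gives the theorem.
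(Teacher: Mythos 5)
Your proposal is correct and follows essentially the same route as the paper: the paper simply cites \cite[Theorem 3.3]{KS13}, whose proof is exactly the argument you spell out --- reformulate the measurable invariant subbundle as a measurable section of the Grassmann bundle, obtain essential bi-invariance from the invariance principle of \cite{AV,ASV} using $\lambda_+(\B,\mu)=\lambda_-(\B,\mu)$, pass to a continuous bi-invariant section via Theorem~\ref{ASVD} (the combination being what \cite[Theorem C]{ASV} packages), and then upgrade to su-$\beta$-H\"older using the H\"older estimate (H4) on holonomies; your closing density argument for $\B$-invariance is also the standard one the paper leaves implicit.
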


This is essentially  \cite[Theorem 3.3]{KS13} with $\beta$-H\"older assumption on $\B$ weakened
to su-$\beta$-H\"older and continuity of $\E'$ improved to su-$\beta$-H\"older in the conclusion.

\begin{proof}
 The proof of Theorem 3.3 in \cite{KS13} goes through essentially without change as it relies on
Theorem C in  \cite{ASV} to show holonomy invariance and continuity of $\E'$.  Theorem C in  \cite{ASV}
 requires only $\lambda_+(\B,\mu)=\lambda_-(\B,\mu)$, continuity of $\B$, and existence of 
 holonomies, for which it suffices to have $\B$ fiber bunched and su-$\beta$-H\"older.
Then holonomy invariance of $\E'$ and H\"older property (H4) of holonomies along  $W^s$ and $W^u$
yield that $\E'$ is su-$\beta$-H\"older.
\end{proof}

\begin{theorem} \label{structure}
Let $(f,\mu)$ be as in Assumptions \ref{PH} and let $\E$ be an su-$\beta$-H\"older
subbundle of a $\beta$-H\"older vector bundle over $\M$.
Let  $\B$ be either a fiber bunched  su-$\beta$-H\"older linear cocycle over $f$ on $\E$,
or, more generally,  a continuous linear cocycle with holonomies as in Proposition \ref{existence of holonomies}.
Then  any $\B$-invariant $\mu$-measurable Riemannian metric (resp. conformal structure) on $\E$
coincides $\mu$-a.e. with an su-$\beta$-H\"older Riemannian metric (resp. conformal structure)
 invariant under $\B$ and under its holonomies.
\end{theorem}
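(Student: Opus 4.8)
The plan is to run the argument used for the analogous statements in \cite{KS13}, replacing the hyperbolic step by Theorem \ref{ASVD}; I describe the main points. Let $\tau$ denote the given $\mu$-measurable $\B$-invariant Riemannian metric (resp.\ conformal structure) on $\E$. First I would set up the ambient fiber bundle: let $\n$ be the continuous fiber bundle over $\M$ whose fiber $\n_x$ is the space of inner products on $\E_x$ (resp.\ the space of inner products on $\E_x$ modulo homotheties). Each $\n_x$ carries a canonical $GL(\E_x)$-invariant metric, namely that of the symmetric space $GL(d,\R)/O(d)$ (resp.\ its quotient by homotheties); this metric is intrinsic to the vector space $\E_x$ and depends continuously on $x$, see \cite{KS13}. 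A $\B$-invariant measurable Riemannian metric (resp.\ conformal structure) on $\E$ is the same thing as a $\mu$-measurable section $\tau$ of $\n$ with $(\B_x)_*\tau(x)=\tau(fx)$ for $\mu$-a.e.\ $x$.

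Since $\B$ is fiber bunched and su-$\beta$-H\"older (or, more generally, has holonomies as in Proposition \ref{existence of holonomies}), its stable and unstable holonomies $H^{\B,s},H^{\B,u}$ exist and induce holonomy maps on $\n$ by pushforward, $h^{s/u}_{x,y}(\tau)=(H^{\B,s/u}_{x,y})_*\tau$. The key structural observation is that a linear isomorphism induces an \emph{isometry} between the symmetric spaces of inner products (resp.\ conformal structures) of its source and target; hence each $h^{s/u}_{x,y}$ is a fiber isometry, in particular a Lipschitz and therefore $\beta$-H\"older homeomorphism. The cocycle identity for $h^{s/u}$ comes from property (H2) and continuity from (H5), so $h^s$ and $h^u$ are stable and unstable holonomies on $\n$ in the sense of Definition \ref{ASV1}. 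The fibers of $\n$ are finite-dimensional smooth manifolds, hence Hausdorff with a countable basis of topology, hence refinable.

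Next I would show that $\tau$ is bi-essentially invariant in the sense of Definition \ref{ASV2}, following the Lusin--Birkhoff scheme from the proofs of Theorem \ref{twist-meas} and Theorem \ref{measurable cohomology}. Pick a compact $S\subset\M$ with $\mu(S)>1/2$ on which $\tau$ is continuous, hence bounded, and let $Y$ be the full-measure set of points whose orbit visits $S$ with frequency $\mu(S)$. For $x,y\in Y$ with $y\in W^s(x)$, using (H3), the identities $(\B^n_x)_*\tau(x)=\tau(f^nx)$ and $(\B^n_y)_*\tau(y)=\tau(f^ny)$, and the fact that $((\B^n_y)^{-1})_*$ is a fiber isometry, one gets
$$
d_{\n_y}\big(h^s_{x,y}(\tau(x)),\,\tau(y)\big)=d_{\n_{f^ny}}\big((H^{\B,s}_{f^nx,f^ny})_*\tau(f^nx),\,\tau(f^ny)\big).
$$
Choosing $n_i\to\infty$ with $f^{n_i}x,f^{n_i}y\in S$ and using $d(f^{n_i}x,f^{n_i}y)\to0$, continuity and boundedness of $\tau$ on $S$, and the estimate (H4) for $H^{\B,s}$, the right-hand side tends to $0$; hence $h^s_{x,y}(\tau(x))=\tau(y)$ for all $x,y\in Y$ with $y\in W^s(x)$, and the symmetric argument gives essential $u$-invariance. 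I would emphasize that this step uses no growth or quasiconformality hypothesis on $\B$ — in contrast with Theorem \ref{measurable cohomology} — precisely because conjugation by $\B^n$ acts by isometries on the fibers of $\n$.

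Finally I would invoke Theorem \ref{ASVD}: since $f$ is accessible, $\tau$ agrees $\mu$-a.e.\ with a \emph{continuous} bi-invariant section $\tilde\tau$ of $\n$. Bi-invariance of $\tilde\tau$ together with the $\beta$-H\"older estimate (H4) for $H^{\B,s}$ and $H^{\B,u}$ along $W^s$ and $W^u$, and boundedness of $\tilde\tau$ on the compact $\M$, force $\tilde\tau$ to be su-$\beta$-H\"older; by construction it is invariant under the holonomies of $\B$. The relation $(\B_x)_*\tilde\tau(x)=\tilde\tau(fx)$ holds on the full-measure set where $\tilde\tau=\tau$, hence, both sides being continuous and $\mu$ having full support, everywhere, so $\tilde\tau$ is $\B$-invariant. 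The conformal-structure case is identical, with inner products replaced by conformal structures throughout. The substantive content of the argument, and the only place requiring care, is the combination of the symmetric-space structure of the fibers of $\n$ (which makes the induced holonomies isometries satisfying Definition \ref{ASV1}) with the Lusin--Birkhoff argument for bi-essential invariance; everything else is assembly of the \cite{ASV} machinery exactly as in the proof of Theorem \ref{twist-meas}.
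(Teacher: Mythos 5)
Your proof is correct and is essentially the same argument the paper relies on: the paper cites \cite[Theorem 3.1 and Proposition 4.4]{KS13}, whose content is exactly your Lusin--Birkhoff derivation of bi-essential holonomy invariance (using that $GL(\E_x)$ acts by isometries on the symmetric space of inner products / conformal structures) followed by an application of the \cite{ASV} machinery, with the final su-$\beta$-H\"older regularity read off from holonomy invariance and (H4). You have simply unpacked those citations into a self-contained argument rather than pointing to them.
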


We recall that the space $\T$ of inner products on $\R^d$ identifies with the space of real 
symmetric positive definite $d\times d$ matrices, which is isomorphic to $GL(d,\R) /SO(d,\R)$. 
The group $GL(d,\R)$ acts transitively on $\T$ via $A[D] = A^T D \, A,$ where $A\in GL(d,\R)$ 
and $D \in \T.$ The space $\T$ is a Riemannian symmetric space of non-positive curvature 
when equipped with a certain $GL(d,\R)$-invariant metric \cite[Ch.\,XII, Theorem 1.2]{L}. 
A conformal structure on $\R^d$, $d\geq 2$, is a class of proportional inner products. 
The space of conformal structures on $\R^d$ can be similarly identified with $SL(d,\R) /SO(d,\R)$,
which is also a Riemannian symmetric space of non-positive curvature 
with a $GL(d,\R)$-invariant metric.
Riemannian metric (resp. conformal structure) on a vector bundle $\E$ is a section of 
the corresponding bundle whose fiber  at $x$ is the  space  of inner products  (resp. conformal structures)
on $\E_x$. See \cite{KS10} for more details.

\vskip.2cm 
\noindent{\it Proof of Theorem \ref {structure}.}
For a fiber bunched $\beta$-H\"older cocycle, global continuity of an invariant $\mu$-measurable conformal structure was established in \cite[Theorem 3.1]{KS13}. The main step, \cite[Proposition 4.4]{KS13},
proves essential holonomy invariance of the conformal structure. Fiber bunching and $\beta$-H\"older
continuity are used only to obtain holonomies, and thus they can be replaced by assuming existence of 
holonomies or by fiber bunching and the su-$\beta$-H\"older property, which imply it.
The global continuity and holonomy invariance, together with the H\"older property  (H4) of holonomies 
along  $W^s$ and $ W^u$, yield that  the conformal structure is su-$\beta$-H\"older. 
This completes the proof in the conformal structure case.

The proof for a Riemannian metric is almost identical, using the space of 
inner products in place of the space of conformal structures, which have the same properties 
for the purpose of the proof, described above. Alternatively, the result can be deduced by obtaining
an invariant conformal structure using the previous case, and then using boundedness
of the cocycle to find a proper normalization.
$\QED$

\begin{corollary} \label{uniform preserves}
Let $(f,\mu)$ be as in Assumptions \ref{PH} and let $\E$ be an su-$\beta$-H\"older
subbundle of a $\beta$-H\"older vector bundle over $\M$.
Suppose that $\B$\, is either an su-$\beta$-H\"older linear cocycle or a continuous linear cocycle with holonomies as in Proposition~\ref{existence of holonomies}. If $\B$ is uniformly bounded (resp. uniformly quasiconformal) 
then $\B$ preserves an su-$\beta$-H\"older invariant Riemannian metric (resp. conformal structure) on $\E$ invariant under the holonomies of $\B$.
\end{corollary}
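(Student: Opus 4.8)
The plan is to produce, by an equivariant averaging argument, a $\mu$-measurable $\B$-invariant Riemannian metric (resp. conformal structure) on $\E$, and then to apply Theorem~\ref{structure} to upgrade it to an su-$\beta$-H\"older one invariant under $\B$ and its holonomies. First I would check that Theorem~\ref{structure} does apply to $\B$ in either alternative of the hypothesis: if $\B$ is uniformly bounded (resp. uniformly quasiconformal) then $\|\B^n_x\|\cdot\|(\B^n_x)^{-1}\|$ is uniformly bounded over $x\in\M$ and $n\in\Z$, where for the uniformly bounded case one uses that by Definition~\ref{slow} uniform boundedness also controls $\|(\B^n_x)^{-1}\|=\|\B^{-n}_{f^nx}\|$. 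Hence if $\B$ is su-$\beta$-H\"older the bunching inequalities \eqref{weak fiber bunched} hold, since $(\nu^n_x)^\beta$ and $(\hat\nu^{-n}_x)^\beta$ decay exponentially on the compact base, so $\B$ is fiber bunched; in either alternative $\B$ therefore has stable and unstable holonomies.

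For the Riemannian case, fix a continuous Riemannian metric $\tau_0$ on $\E$ and, for $n\in\Z$, let $D_n(x)=(\B^n_x)^*\tau_0(f^nx)$ be the inner product on $\E_x$ given by $\langle u,v\rangle_{D_n(x)}=\langle\B^n_xu,\B^n_xv\rangle_{\tau_0(f^nx)}$. Uniform boundedness of $\B$ yields a constant $M$ with $M^{-1}\tau_0(x)\le D_n(x)\le M\tau_0(x)$ for all $x$ and $n$, so $\{D_n(x):n\in\Z\}$ is a bounded subset of the fiber over $x$ of the bundle of inner products on $\E$. That fiber, identified with $GL(d,\R)/SO(d,\R)$, is a complete simply connected space of nonpositive curvature, so the bounded set $\{D_n(x):n\in\Z\}$ has a unique circumcenter $\tau(x)$, and since the $D_n$ are measurable the section $x\mapsto\tau(x)$ is $\mu$-measurable. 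Now $\B_x\colon\E_x\to\E_{fx}$ induces an isometry $(\B_x)^*$ from the fiber over $fx$ to the fiber over $x$, and a direct computation gives $(\B_x)^*D_n(fx)=D_{n+1}(x)$, so $(\B_x)^*$ maps $\{D_n(fx):n\in\Z\}$ bijectively onto $\{D_n(x):n\in\Z\}$; by uniqueness of the circumcenter and its invariance under isometries, $(\B_x)^*\tau(fx)=\tau(x)$, which says exactly that $\B$ preserves $\tau$. Thus $\tau$ is a $\mu$-measurable $\B$-invariant Riemannian metric on $\E$, and Theorem~\ref{structure} produces an su-$\beta$-H\"older $\B$-invariant Riemannian metric on $\E$, invariant under the holonomies of $\B$, which is the assertion.

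The conformal case is handled in exactly the same way: replace $\tau_0$ by its conformal class and $D_n(x)$ by the conformal class of $(\B^n_x)^*\tau_0(f^nx)$, and replace the fiber of inner products by the fiber of conformal structures on $\E_x$, identified with $SL(d,\R)/SO(d,\R)$, which is again a complete nonpositively curved space. Uniform quasiconformality of $\B$ is precisely what makes $\{D_n(x):n\in\Z\}$ bounded in this fiber, so the circumcenter construction again yields a $\mu$-measurable $\B$-invariant conformal structure, and the conformal version of Theorem~\ref{structure} finishes the proof. The only points requiring care are the measurability and the isometry-equivariance of the circumcenter of a bounded set in a complete nonpositively curved space; both are standard, and since the substantive work is already packaged in Theorem~\ref{structure}, I do not expect a genuine obstacle here.
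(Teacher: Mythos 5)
Your proposal is correct and follows essentially the same route as the paper: verify that holonomies exist (fiber bunching from the boundedness assumption in the su-H\"older alternative, directly assumed in the other), construct a $\mu$-measurable $\B$-invariant metric or conformal structure, and then invoke Theorem~\ref{structure} to upgrade it to an su-$\beta$-H\"older structure invariant under $\B$ and its holonomies. The only difference is that the paper disposes of the construction of the measurable invariant structure by citing \cite[Proposition 2.4]{KS10} (stated there for conformal structures, with the remark that the Riemannian case is identical), whereas you spell out that construction via pullbacks $D_n(x)=(\B^n_x)^*\tau_0(f^nx)$ and circumcenters in the nonpositively curved symmetric space $GL(d,\R)/SO(d,\R)$ (resp.\ $SL(d,\R)/SO(d,\R)$) --- which is precisely the argument of that cited proposition, so there is no genuine divergence.
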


\begin{proof}
We note that for an su-$\beta$-H\"older cocycle both uniform boundedness and uniform quasiconformality imply fiber bunching and give existence of holonomies. 
Thus $\B$ has holonomies and by the previous theorem it suffices to obtain an invariant measurable Riemannian metric (resp. conformal structure) on $\E$. For conformal structure this is \cite[Proposition 2.4]{KS10}. The same result holds
for the case of a Riemannian metric and argument carries over without changes. 
\end{proof}

 \subsection{Proof of Theorem \ref{main}} \label{proof of thm main}$\;$\\
We consider the invariant flag  \eqref{flagH} for $\A$ assumed in the theorem,
$$
\{0\} =\vv^0 \subset \vv^1 \subset \dots   \subset \vv^{k-1} \subset \vv^k = \E,
\; \text{ and the quotient-bundles $\,\U^i=\vv^i/\vv^{i-1}$. }
$$
We fix a background su-$\beta$-H\"older Riemannian metric $g$ on $\E$. Then for $i=1,\dots,k$, 
 the orthogonal complement of $\vv^{i-1}$ in $\vv^i$ is an su-$\beta$-H\"older subbundle of $\E$,
 which we denote by $\V^i$. Then we have $\vv^i=\V^1 \oplus \cdots \oplus \V^i$, but  in general only
  $\V^1 =\vv^1$ is $\A$-invariant while $\V^i$ with $i>1$ are not.

 We use the splitting $\E=\V^1 \oplus \cdots \oplus \V^k$ to define a block  triangular structure for $\A$.
 We denote by $P^{j}:\E \to \V^j$ the projection to the $\V^j$  component in  this splitting, and 
 define the blocks $\A^{j,i} : \V^i \to \V^j$ as $\A^{j,i} =P^j\circ \A |_{ \V^i}$. 
The invariance of the flag implies that $\A^{j,i}=0$ for $j>i$.

The projection $P^{i}|_{ \vv^i}$ induces a continuous bundle isomorphism between
 $\V^i$ and the quotient $\U^i$. Here we use continuous category for quotient bundles 
 and structures on them, since the  su-$\beta$-H\"older regularity was defined only for
 subbundles. However, using this isomorphism we identify the quotient cocycle $\tilde \A^{(i)}$
 on $\U^i$ with a linear cocycle $\A^{(i)}$ on the su-$\beta$-H\"older subbundle $\V^i$. 
 The cocycle $\A^{(i)}$ is also su-$\beta$-H\"older, as $\A^{(i)}_x$  coincides with  
 the block $\A^{i,i}_x$. By continuity of the isomorphism, since $\psi \tilde \A^{(i)}$ is
 uniformly bounded by the assumption, so is the cocycle $\psi \A^{(i)}$.
Now Corollary~\ref{uniform preserves} yields that $\psi \A^{(i)}$ preserves 
an su-$\beta$-H\"older Riemannian metric $\sigma_i$ on $\V^i$.
By isomorphism,  the quotient $\psi \tilde \A^{(i)}$ also preserves a continuous Riemannian metric 
$\tilde \sigma_i$ on $\U^i$. 

\begin{proposition}  \label{prop distortion} 
Let $\A$ and $k$ be as in Theorem \ref{main}. Then there exists a constant $c$ such that 
 for all $x\in \M$ and $\,0\ne n\in \Z$,
$$\|(\psi \A)_x^n\| \le c |n|^{k-1} \quad \text{and } \quad \|\A_x^n\|\cdot \|( \A_x^n)^{-1}\| \le c |n|^{2(k-1)}.
$$
 In particular, $\A$ has one Lyapunov exponent  for each 
  $f$-invariant ergodic measure $\nu$, that is $\lambda_+(\A,\nu)=\lambda_-(\A,\nu)$.
\end{proposition}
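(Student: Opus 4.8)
The plan is to leverage the block upper triangular structure of $\psi\A$ set up above. With respect to the su-$\beta$-H\"older splitting $\E=\V^1\oplus\cdots\oplus\V^k$, every single step $(\psi\A)_x$ is block upper triangular, since its $(j,i)$-block vanishes for $j>i$ by invariance of the flag \eqref{flagH}; hence so is every iterate $(\psi\A)_x^n$, $n\ge1$, and its diagonal blocks are exactly $\big((\psi\A)_x^n\big)^{i,i}=(\psi\A^{(i)})_x^n$. Because $\psi\A^{(i)}$ preserves the su-$\beta$-H\"older Riemannian metric $\sigma_i$ on $\V^i$ and $\sigma_i$ is uniformly comparable to the background metric $g$ on the compact $\M$, there is $K_1\ge1$ with $\|(\psi\A^{(i)})_x^n\|\le K_1$ for all $x\in\M$, $n\in\Z$, $i=1,\dots,k$; moreover $\|(\psi\A)_x\|\le K_2$ and $\|(\psi\A)_x^{-1}\|\le K_2$ for all $x$ by continuity on the compact $\M$. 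The inverse cocycle $x\mapsto(\psi\A)_x^{-1}$ has the same features: it is block upper triangular for the same splitting, its single steps have norm $\le K_2$, and a product of consecutive diagonal blocks is a composition of $\sigma_i$-isometries, hence of $g$-norm $\le K_1$.

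The heart of the argument is an elementary estimate on products of such matrices: if $M_1,\dots,M_n$ are block upper triangular invertible matrices with $k$ blocks, $\|M_\ell\|\le K_2$ for every $\ell$, and every product $M_t\cdots M_s$ of \emph{consecutive} factors has all diagonal blocks of norm $\le K_1$, then $\|M_n\cdots M_1\|\le C\,n^{k-1}$ with $C=C(k,K_1,K_2)$. I would prove this by expanding the $(j,i)$-block of $M_n\cdots M_1$ as a sum over non-increasing index chains $i=b_0\ge b_1\ge\cdots\ge b_n=j$ in $\{1,\dots,k\}$; each maximal constant run of such a chain contributes a product of consecutive diagonal blocks (of norm $\le K_1$), and each of the at most $i-j\le k-1$ strict drops contributes one off-diagonal factor (of norm $\le K_2$), so every term has norm $\le K_1^{k}K_2^{k-1}$. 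A chain of this type is determined by the $\le k-1$ positions of its drops among the $n$ slots together with finitely many choices of drop sizes, so there are $O(n^{k-1})$ terms; summing over the finitely many blocks gives the bound. Applying this to $M_\ell=(\psi\A)_{f^{\ell-1}x}$ yields $\|(\psi\A)_x^n\|\le c\,n^{k-1}$ for $n\ge1$, and applying it to the single steps of the inverse cocycle (whose product is $(\psi\A)_x^{-n}$) yields the same bound for $n\le-1$. This is the first inequality of the proposition.

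For the second inequality, note that $(\psi\A)_x^n=s_n(x)\,\A_x^n$ for a positive scalar, namely $s_n(x)=\prod_{j=0}^{n-1}\psi(f^jx)$ for $n>0$ and $s_n(x)=\big(\prod_{j=n}^{-1}\psi(f^jx)\big)^{-1}$ for $n<0$, since scalars commute past the linear maps. Hence $\|\A_x^n\|\cdot\|(\A_x^n)^{-1}\|=\|(\psi\A)_x^n\|\cdot\|((\psi\A)_x^n)^{-1}\|$. Since $((\psi\A)_x^n)^{-1}=(\psi\A)_{f^nx}^{-n}$, the first inequality applied at $f^nx$ bounds both factors by $c\,|n|^{k-1}$, giving $\|\A_x^n\|\cdot\|(\A_x^n)^{-1}\|\le c^2\,|n|^{2(k-1)}$; renaming the constant gives the stated form. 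Finally, for any $f$-invariant ergodic $\nu$, combining this with \eqref{exponents} gives $0\le\lambda_+(\A,\nu)-\lambda_-(\A,\nu)=\lim_{n\to\infty}n^{-1}\ln\big(\|\A_x^n\|\cdot\|(\A_x^n)^{-1}\|\big)\le\lim_{n\to\infty}n^{-1}\ln(c^2 n^{2(k-1)})=0$, so $\lambda_+(\A,\nu)=\lambda_-(\A,\nu)$.

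The main obstacle is the bookkeeping in the elementary estimate: one must group the diagonal factors into \emph{consecutive} runs so that each run is a genuine single iterate $(\psi\A^{(i)})_\cdot^{\,m}$, bounded by $K_1$ rather than by $K_2^m$, and one must check that the number of admissible index chains is $O(n^{k-1})$ and no larger. It is precisely the reduction of negative iterates to positive iterates of the structurally identical inverse cocycle that keeps the exponent at $k-1$: estimating $(\psi\A)_x^{-n}$ by inverting $(\psi\A)_{f^{-n}x}^{n}$ directly would only give the weaker exponent $(k-1)^2$.
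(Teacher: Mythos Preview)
Your argument is correct. The paper's own proof is not self-contained: it simply invokes the proof of \cite[Theorem 3.10]{KS13}, noting that only the invariant flag and the continuous invariant metrics on the quotients are needed, and then deduces the Lyapunov exponent statement exactly as you do. Your combinatorial expansion of the $(j,i)$-block of a product of block upper triangular maps over non-increasing index chains, with consecutive diagonal runs bounded via the invariant metrics $\sigma_i$ and at most $k-1$ off-diagonal drops, is precisely the mechanism behind that cited result; so your proof supplies the details the paper omits rather than taking a different route. Your handling of negative iterates via the inverse single-step cocycle, which remains block upper triangular with $\sigma_i$-isometric diagonal blocks, is the right way to keep the exponent at $k-1$, and your observation that naive inversion would only give $(k-1)^2$ is accurate.
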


\begin{proof}
This follows from the proof of \cite[Theorem 3.10]{KS13}  which uses only the invariant
 flag  \eqref{flagH} with continuous  invariant Riemannian metrics on the quotients. The last statement
 follows from the second inequality since
\vskip.2cm
\hskip1cm$ \lambda_+(\A,\nu)-\lambda_-(\A,\nu)=\underset {n\to\infty}\lim n^{-1} \ln (\|\A_x^n\| \cdot \|( \A_x^n)^{-1}\|)\quad\text{for $\nu$ a.e. $x$.}
 $ 
 \end{proof}

\vskip.1cm
Since we have $\lambda_+(\A,\mu)=\lambda_-(\A,\mu)$ and $\B$ is $\mu$-measurably 
conjugate to $\A$, we also have $\lambda_+(\B,\mu)=\lambda_-(\B,\mu).$ 
This follows from an easy lemma: 

\begin{lemma} \cite[Lemma 4.4]{KSW} \label{equal exp}
Let $\mu$ be an ergodic $f$-invariant measure.
If $\c$ is a $\mu$-measurable conjugacy between cocycles $\A$ and $\B$, then for $\mu$ a.e. $x$
and for each vector $0\ne u \in \E_x$ the forward (resp. backward) Lyapunov exponent of $u$
under $\A$ equals that of $\c(x)u$ under $\B$.
\end{lemma}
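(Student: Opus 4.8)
The plan is to recover the classical fact that conjugate cocycles have equal Lyapunov exponents, taking care of the one delicate point: $\c$ is only measurable, so $n^{-1}\log\|\c(f^n x)\|$ need not tend to $0$ along orbits. I would sidestep any ``tempering'' of $\c$ entirely by using that the relevant Lyapunov limits \emph{exist} (by Oseledets), so that it is enough to evaluate them along a subsequence of times at which $\c$ is bounded.

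Concretely, I would first fix continuous Riemannian norms on $\E$ and $\E'$ and invoke the Oseledets theorem for $\A$ and $\B$ (applicable since their generators are continuous on the compact $\M$, hence log-integrable): there is a full-measure set $G_0$ on which, for every nonzero $u\in\E_x$ and every nonzero $w\in\E'_x$, the forward limits $\lambda^\A(x,u)=\lim_{n\to\infty}n^{-1}\log\|\A^n_x u\|$ and $\lambda^\B(x,w)=\lim_{n\to\infty}n^{-1}\log\|\B^n_x w\|$ exist, together with their backward analogues. Next, for each $k\in\N$ I would apply Lusin's theorem to the measurable map $x\mapsto(\|\c(x)\|,\|\c(x)^{-1}\|)$ to obtain a compact set $S_k$ with $\mu(S_k)>1-1/k$ on which $\|\c\|,\|\c^{-1}\|\le M_k$; by ergodicity, $\mu$-a.e.\ $x$ satisfies $f^n x\in S_k$ for infinitely many $n>0$ and for infinitely many $n<0$ and all $k$, and I would call this set $G_1$. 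Letting $G_2$ be the full-measure set where \eqref{C def} holds and setting $\Gamma=\bigcap_{n\in\Z}f^{-n}(G_0\cap G_1\cap G_2)$, which again has full measure, I get for $x\in\Gamma$ the iterated relation $\B^n_x=\c(f^n x)\circ\A^n_x\circ\c(x)^{-1}$ for all $n\in\Z$.

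Then, for $x\in\Gamma$ and $0\ne u\in\E_x$, I would put $w=\c(x)u\ne 0$ and observe that whenever $f^n x\in S_k$ one has $\B^n_x w=\c(f^n x)\A^n_x u$, whence
\[
M_k^{-1}\,\|\A^n_x u\|\ \le\ \|\B^n_x w\|\ \le\ M_k\,\|\A^n_x u\|,
\]
where the lower bound uses $\|\c(f^n x)\A^n_x u\|\ge\|\c(f^n x)^{-1}\|^{-1}\|\A^n_x u\|$. Taking logarithms, dividing by $n$, and passing to the limit along the infinitely many such $n$ kills the $n^{-1}\log M_k$ error term; since $\lambda^\A(x,u)$ and $\lambda^\B(x,w)$ are honest limits they coincide with their values along this subsequence, so $\lambda^\A(x,u)=\lambda^\B(x,w)$. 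The backward statement is identical, applying the argument to $f^{-1}$ (which also preserves $\mu$) and using $\B^{-n}_x=\c(f^{-n}x)\circ\A^{-n}_x\circ\c(x)^{-1}$.

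The only real obstacle is the one flagged at the outset: because $\c$ is merely measurable one cannot compare the $n$-th terms for all $n$, and the remedy is precisely the interplay of Oseledets (the limits exist) with Lusin and Birkhoff (a positive-frequency set of return times on which $\c$ is controlled). The remaining points — measurability of $\|\c^{\pm1}\|$, the choice of bundle norms, and the iteration of \eqref{C def} — are routine.
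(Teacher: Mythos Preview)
Your argument is correct. The key insight---that Oseledets guarantees the Lyapunov limits actually \emph{exist}, so it suffices to evaluate them along a subsequence of times at which $\c\circ f^n$ is uniformly bounded, and that Lusin plus Birkhoff supplies such a subsequence with positive frequency---is exactly the right way to handle a merely measurable conjugacy. The iterated relation $\B^n_x=\c(f^n x)\circ\A^n_x\circ\c(x)^{-1}$ on the $f$-invariant full-measure set $\Gamma$, and the two-sided norm comparison when $f^n x\in S_k$, are both verified correctly.

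As to comparison with the paper: the paper does not actually prove this lemma. It is quoted verbatim as \cite[Lemma 4.4]{KSW} and invoked as ``an easy lemma'' without argument. That said, your Lusin--Birkhoff strategy is precisely the mechanism the paper itself employs elsewhere to control measurable data along orbits---see the proofs of Theorem~\ref{twist-meas} and Theorem~\ref{measurable cohomology}, where a compact Lusin set $S$ with $\mu(S)>1/2$ and simultaneous returns $f^{n_i}x,f^{n_i}y\in S$ play the identical role. So your proof is both correct and stylistically aligned with the paper's toolkit.
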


Now we construct the corresponding flag structure for $\B$. Denoting $\mathcal{V}^i_x=\c(x) \vv^i_x$ 
we obtain  the corresponding flag of measurable $\B$-invariant  sub-bundles
$$
\{0\} =\mathcal{V}^0 \subset \mathcal{V}^1\subset \mathcal{V}^2 \subset \dots \subset \mathcal{V}^k=\E'.
$$
Since $\lambda_+(\B,\mu)=\lambda_-(\B,\mu)$, and since $\B$ is su-$\beta$-H\"older 
and fiber bunched by the assumption, Theorem \ref{distribution} yields that this flag 
for $\B$ is su-$\beta$-H\"older. 

Similarly to the case of $\A$,  for each $i=1, ... , k$, we define the corresponding objects for $\B$: 
the continuous quotient bundle $\UU^i=\mathcal{V}^i/\mathcal{V}^{i-1}$ with the induced quotient 
cocycle $\tilde \B^{(i)}$;   the orthogonal complement  $\VV^i$ 
of $\mathcal{V}^{i-1}$ in $\mathcal{V}^i$ with respect to an su-$\beta$-H\"older background 
Riemannian metric $g'$ on $\E'$; the projection $\PP^{i}:\EE \to \VV^i$ for the su-$\beta$-H\"older splitting
$\E'=\VV^1 \oplus \cdots \oplus \VV^k$; su-$\beta$-H\"older blocks $\B^{j,i} =\PP^j\circ \B |_{ \V^i}$ 
with  triangular structure $\B^{j,i}=0$ for $j>i$; and the su-$\beta$-H\"older cocycle $\B^{(i)}$ on  $\VV^i$
with $\B^{(i)}_x=\B^{i,i}_x$ that is continuously  isomorphic to the quotient cocycle $\tilde \B^{(i)}$.

We note that $\c$ does not necessarily map $\V^i$ to $\VV^i$ for $i>1$.
We denote the restriction of $\c$ to $\V^i$ by $\c^i$ and define the blocks
 by  $\c^{j,i}=\PP^{j} \circ \c^i : \V^i \to \VV^j$.
 Since $\mathcal{V}^i_x=\c(x) \vv^i_x$, we have $\c^i (\V^i) \subset  \mathcal{V}^i$ and thus $\c^{j,i}=0$ for $j>i$,
so that $\c$ also has the block triangular structure.

First we show that the diagonal blocks $\c^{i,i} : \V^i \to \VV^j$ are su-$\beta$-H\"older, for $i=1,\dots,k$. 
For this we note that $\c^{i,i}$ gives a measurable conjugacy between su-$\beta$-H\"older cocycles 
$\A^{(i)}$  on $ \V^i$ and $\B^{(i)}$  on $ \VV^j$.  Recall that $\A^{(i)}$ is conformal with respect 
to metric $\sigma_i$.  The cocycle $\B^{(i)}$ has holonomies as in Proposition \ref{existence of holonomies} induced, via the quotient, by the holonomies of the cocycle $\B$, which is su-$\beta$-H\"older and fiber bunched  by the assumption.
Now part (ii) of Theorem \ref{measurable cohomology}  shows that $\c^{i,i}$ is su-$\beta$-H\"older. Also, pushing the metric $\sigma_i$ by $\c^{i,i}$ to $ \VV^j$ we obtain a 
Riemannian metric $ \tau_i$ for which $\B^{(i)}$ is conformal and $\psi \B^{(i)}$ is isometric.


We will now show inductively that the restriction of $\c$ to $\vv^i$ is su-$\beta$-H\"older for $i=1,\dots,k$.
The base case $i=1$ follows from the previous paragraph since $\c|_{\vv^1}=\c^{1,1}$.

Now we describe the inductive step. Assuming that the restriction of $\c$ to $\vv^{i-1}$ is 
su-$\beta$-H\"older we show that so is the restriction to $\vv^{i}$. Since $\vv^i=\V^i \oplus \vv^{i-1}$,
it suffices to show that the  restriction $\c^{i}$ of $\c$ to $\V^{i}$ is  also su-$\beta$-H\"older.
We establish this for its components $\c^{j,i}$, $j=i,\dots ,1$, by induction. In the base case
$j=i$ we already know that the diagonal block $\c^{i,i}$ is su-$\beta$-H\"older. 

Now we show that $\c^{i-\ell,i}$, with $0<\ell<i$, is su-$\beta$ H\"older assuming that $\c^{i-j,i}$  is su-$\beta$-H\"older for $j=0,1,\dots \ell-1$.
Using the conjugacy equation
$$
\B_x \circ \c_x =\c_{fx}  \circ \A_x
$$
and equating the $(i-\ell,i)$ components we obtain
$$
\begin{aligned}
&\B^{i-\ell,i-\ell}_x \circ \c^{i-\ell,i}_x + \B^{i-\ell,i-\ell+1}_x \circ \c^{i-\ell+1,i}_x + \dots +\B^{i-\ell,i}_x \circ \c^{i,i}_x \\
&
=\,\c^{i-\ell,i-\ell}_{fx}  \circ \A^{i-\ell,i}_x + \c^{i-\ell,i-\ell+1}_{fx}  \circ \A^{i-\ell+1,i}_x + \dots +\c^{i-\ell,i}_{fx}  \circ \A^{i,i}_x
\end{aligned}
$$
and hence
\begin{equation} \label{CD}
\c^{i-\ell,i}_x  = (\B^{i-\ell,i-\ell}_x)^{-1} \circ \c^{i-\ell,i}_{fx}  \circ \A^{i,i}_x \, + D_x
\end{equation}
where
$$
\begin{aligned}
D_x  =\,\, & (\B^{i-\ell,i-\ell}_x)^{-1} \circ(\c^{i-\ell,i-\ell}_{fx}  \circ \A^{i-\ell,i}_x + \dots +\c^{i-\ell,i-1}_{fx} \circ \A^{i-1,i}_x ) \\
&-(\B^{i-\ell,i-\ell}_x)^{-1} \circ ( \B^{i-\ell,i-\ell+1}_x \circ \c^{i-\ell+1,i}_x + \dots +\B^{i-\ell,i}_x \circ \c^{i,i}_x ).
\end{aligned}
$$
Then equation \eqref{CD} is of the form \eqref{twisteq} with
$$
\p _x =D_x,\;\; \eta_x=\c^{i-\ell,i}_x, \;\;\text{and}\;\;  \F_x(\q_{fx})=  (\B^{i-\ell,i-\ell}_x)^{-1} \circ \eta_{fx}  \circ \A^{i,i}_x, 
$$
where we view $\c^{i-\ell,i}_x$ and $D_x$ as sections of the bundle $\L(\V^i,\VV^{i-\ell})$ 
whose fiber at $x$ is the space $L(\V^i_x,\VV^{i-\ell}_x)$ of linear maps from $\V^i_x$ to 
$\VV^{i-\ell}_x$. This is a subbundle of the  $\beta$-H\"older  bundle $\L=\L(\E,\E')$, 
where we view $L(\V^i_x,\VV^{i-\ell}_x)$ as the subspace of those operators in $L(\E_x,\E_x')$ 
for which all other blocks, with respect to the splittings $\E_x=\, \oplus \V^i_x$ and 
$\E_x'=\oplus \, \VV^i_x$,   are zeros.
Since the splittings are su-$\beta$-H\"older, so is the subbundle
$\L(\V^i,\VV^{i-\ell})$. We also have that $D_x$ is su-$\beta$-H\"older since we inductively 
know that all its terms are su-$\beta$-H\"older. Indeed, for the second term this follows 
from the assumption that $\c^{i-j,i}$  is su-$\beta$-H\"older for $j=0,1,\dots \ell-1$,
and for the first term this follows from the assumption that the restriction of $\c$ to 
 $\vv^{i-1}$ is su-$\beta$-H\"older and hence so are all blocks $\c^{i-\ell,m}$ with $m\le i-1$.
We view $\F$ as a linear cocycle on the bundle 
$\L(\V^i,\VV^{i-\ell})$ over $f^{-1}$, and it is su-$\beta$-H\"older since so are $\B^{i-\ell,i-\ell}$ 
and $ \A^{i,i}$. 
Moreover, $\F$ is uniformly bounded since the cocycles $\psi \B^{i-\ell,i-\ell}$ and $\psi \A^{i,i}$ 
are isometric  respect to $ \sigma_i$ and  $ \tau_i$
and hence 
$$\|\F_x(\q_{fx})\| \le \|(\psi \B^{i-\ell,i-\ell}_x)^{-1}\| \cdot \|\q_{fx}\| \cdot \|\psi \A^{i,i}_x\| = \|\q_{fx}\|.$$
Thus we can apply Theorem \ref{twist-meas} and conclude that $\c^{i-\ell,i}$ is su-$\beta$-H\"older.
\vskip.1cm

The argument above applies to $\ell=1, \dots i-1$ and we conclude that all 
$\c^{1,i}, \dots  ,\c^{i,i}$ are su-$\beta$-H\"older.  We also recall that  $\c^{j,i}=0$ for $j>i$,
and thus the  restriction $\c^{i}$ of $\c$ to $\V^{i}$ is  also su-$\beta$-H\"older.
This proves that so is the restriction of $\c$ to $\vv^{i}$ and completes the inductive step.
We conclude that $\c$ is su-$\beta$-H\"older, completing the proof of Theorem~\ref{main}.


 \section{Proofs of Theorems \ref{constant 1exp} and \ref{constant perturb}} \label{proof of constant}

\subsection {Proof of Theorem \ref{constant 1exp}} 
Let $A$ be the matrix generating cocycle $\A$ in Theorem \ref{constant 1exp}. 
The one exponent assumption  means that all eigenvalues of $A$ have the same modulus 
$\rho$.  Then the real Jordan canonical form of matrix $\rho^{-1}A$ has block triangular structure with 
orthogonal bocks on the diagonal. This yields the corresponding flag of invariant subbundles for 
the cocycle $\A$ with properties as in Theorem \ref{main}. Hence Theorem \ref{main} implies Theorem \ref{constant 1exp}.

\subsection{Proof of Theorem \ref{constant perturb}} 
Now we deduce Theorem  \ref{constant perturb} from Theorem \ref{constant 1exp}. Let $A$ be the matrix generating the cocycle 
$\A$ and let $\rho_1 < \dots <\rho_\ell$ be the distinct moduli of its eigenvalues. We consider
the corresponding invariant splitting
\begin{equation} \label{splitA}
 \R^d = E^1 \oplus  \dots \oplus E^\ell,
\end{equation}
where $E^i$ denotes the sum of the generalized eigenspaces of $A$ corresponding to the eigenvalues
of modulus $\rho_i$. This gives a splitting of the trivial bundle $\E=\M\times \R^d$
into $\A$-invariant constant subbundles $E^i$. For any $\e>0$ there is a suitable norm on $\R^d$ 
with resect to which we have
\begin{equation} \label{rateA}
 (\rho_i-\e)^n \le \| \A^n u \| \le  (\rho_i+\e)^n
\quad \text{for any unit vector }u\in E^i \text{ and }n\in \Z.
\end{equation}

Let  $B(x)=\B_x:\M\to GL(d,\R)$ be the generator of the cocycle $\B$. If $B$ is
sufficiently $C^0$ close to $A$, then $\E=\M\times \R^d$ has a continuous $\B$-invariant
splitting $C^0$ close to \eqref{splitA},
$$
\R^d = \EE^1_x \oplus \dots \oplus \EE^\ell_x,
$$
for which estimates similar to  \eqref{rateA} hold,
$$
 (\rho_i-2\e)^n \le \| \B^n u \| \le  (\rho_i+2\e)^n
\quad \text{for any unit vector }u\in \EE^i  \text{ and }n\in \Z.
$$
Moreover, for a H\"older $\B$ it is well known that the splitting is also H\"older with some 
exponent $\beta>0$, which may be smaller than that of $\B$. 
See for example \cite[Lemma 5.1]{KSW}, which gives estimates for  $\beta$ in terms of $\rho_i$ and $f$.
We conclude that all restrictions $\B_i=\B|_{\E^i}$ are $\beta$-H\"older and hence are fiber bunched
 if $\e$ is sufficiently small.

Let $\c$ be a measurable conjugacy between $\A$ and $\B$.
We claim that  $\c$ maps $E^i$ to $\EE^i$, that is,
$\c_x (E^i)=\EE^i_x$ for $\mu$ a.e.~$x$. Indeed, by Lemma \ref{equal exp}, for $\mu$ a.e.~$x$ and for each unit vector $u \in E^i_x$
the forward and backward Lyapunov exponent of $\c_x(u)$ is $\ln \rho_i$.
This yields that $\c_x(u) \in \EE^i$, as having a non-zero component in another $\EE^j$ would
imply having forward or backward Lyapunov exponent under $\B$ different from $\ln \rho_i$
if $\e$ is sufficiently small.
Then $\c_i=\c|_{ E^i}$ is a measurable conjugacy between the constant cocycle $\A_i=\A|_{E^i}$ with 
one Lyapunov exponent and the $\beta$-H\"older fiber bunched cocycle $\B_i$.
By Theorem \ref{constant 1exp} each $\c_i$,  $i=1,\dots,\ell$,
is su-$\beta$-H\"older  and hence so is $\c$.
 This completes the proof of Theorem \ref{constant perturb}.


\vskip.7cm


\begin{thebibliography}{99}


\bibitem[ASV13]{ASV} 
A. Avila, J. Santamaria, M. Viana. {\em Holonomy invariance: 
rough regularity and applications to Lyapunov exponents.}
Asterisque 358 (2013), 13-74.
\vskip.05cm

\bibitem[AV10]{AV} A. Avila, M. Viana. 
           {\em Extremal Lyapunov exponents: an invariance principle 
        and applications.} Inventiones Math. 181 (2010), 115-189.

\bibitem[DW]{DW} J. DeWitt. {\em Cocycles measurably conjugate to unipotent over hyperbolic systems.} Preprint.


\bibitem[KS10]{KS10} B. Kalinin, V. Sadovskaya.  
           {\em Linear cocycles over hyperbolic systems and criteria of conformality.}
           Journal of Modern Dynamics, vol. 4 (2010), no. 3, 419-441.
         
 \bibitem[KS13]{KS13}  B. Kalinin, V. Sadovskaya.         
         {\em Cocycles with one exponent over partially hyperbolic systems.}
         Geometriae Dedicata, Vol. 167, Issue 1 (2013), 167-188. 
                            
\bibitem[KS16]{KS15} B. Kalinin, V. Sadovskaya.   
{\it Holonomies and cohomology for cocyces over partially hyperbolic diffeomorphisms}. 
 Discrete and Continuous Dynamical Systems. Vol. 36, no. 1 (2016), 245-259.

\bibitem[KSW23]{KSW} B. Kalinin, V. Sadovskaya, Z. Wang.   {\em  Smooth local rigidity for hyperbolic toral automorphisms.}  Comm. Amer. Math. Soc. 3 (2023), 290-328.

\bibitem[dlL92]{L}  R. de la Llave. {\em Smooth conjugacy and SRB measures for uniformly  and nonuniformly hyperbolic systems.} Comm. Math. Phys.   {150} (1992), 289-320. 

\bibitem[NP99]{NP} M. Nicol, M. Pollicott. {\em Measurable cocycle 
                rigidity for some non-compact groups.}
                    Bull. London Math. Soc., 31(5), 592-600, 1999.

\bibitem[PW01]{PW} M. Pollicott, C. P. Walkden.  
                {\em Liv\v{s}ic theorems for connected Lie groups.} 
                Trans. Amer. Math. Soc., 353(7), 2879-2895, 2001. 
                
\bibitem[S13]{S13} V. Sadovskaya. {\em Cohomology of $GL(2,\R)$-valued cocycles over hyperbolic systems.} Discrete and Continuous Dynamical Systems, vol. 33, no. 5 (2013), 2085-2104.

 \bibitem[S15]{S15} V. Sadovskaya. {\em Cohomology of fiber bunched 
cocycles over hyperbolic systems.}
  Ergodic Theory and Dynamical Systems, Vol. 35, Issue 8  (2015),  2669-2688.

 \bibitem[Sch99]{Sch} K. Schmidt.
         {\em Remarks on Livsic theory for non-Abelian cocycles.}
           Ergodic Theory and Dynamical Systems, 19(3), (1999), 703-721.

                       
\bibitem[W13]{W}   A. Wilkinson. {\em The cohomological equation for partially hyperbolic diffeomorphisms.} \\Asterisque, {\bf 358} (2013) 75--165. 

\end{thebibliography}
\end{document}